\newcommand{\newstuff}[1]{{#1}}
\newcommand{\oldstuff}[1]{}
\theoremstyle{definition}
\newtheorem{definition}{Definition}[section]
\theoremstyle{remark}
\newtheorem{remark}[definition]{Remark}
\theoremstyle{plain}
\newtheorem{theorem}[definition]{Theorem}
\newtheorem{corollary}[definition]{Corollary}
\newtheorem{lemma}[definition]{Lemma}
\newtheorem{proposition}[definition]{Proposition}
\DeclareMathDelimiter{\lvert}{\mathopen}{symbols}{"6A}{largesymbols}{"0C}
\DeclareMathDelimiter{\rvert}{\mathclose}{symbols}{"6A}{largesymbols}{"0C}
\DeclareMathDelimiter{\lVert}{\mathopen}{symbols}{"6B}{largesymbols}{"0D}
\DeclareMathDelimiter{\rVert}{\mathclose}{symbols}{"6B}{largesymbols}{"0D}
\newcommand{\Lspace}[2][2]{{{L}}^{#1}(#2)} 
\newcommand{\Rd}{{\mathds{R}^3}}
\newcommand{\Cd}{{\mathds{C}^3}}
\newcommand{\norm}[2]{\left\lVert {#2} \right\rVert_{#1}} 
\newcommand{\pairing}[3][]{\left\langle {#2},{#3}\right\rangle_{#1}} 
\newcommand{\absval}[1]{\left\lvert{#1}\right\rvert} 
\newcommand{\rbra}[1]{\left({#1}\right)} 
\newcommand{\cbra}[1]{\left\{{#1}\right\}} 
\newcommand{\intx}[3][x]{\int_{#2} #3 \, \text{d}#1}
\DeclareMathOperator{\supp}{supp}
\DeclareMathOperator*{\argmin}{arg\,min}
\DeclareMathOperator{\dom}{dom}
\DeclareMathOperator{\vol}{vol}
\newcommand{\Rep}[1]{\Re\rbra{#1}}
\newcommand{\Imp}[1]{\Im\rbra{#1}}
\newcommand{\cube}[1]{\mathfrak{C}({#1})}
\newcommand{\ball}[1]{{\mathfrak{B}({#1})}}
\newcommand{\sobolev}[1]{\left(1+\absval{\gamma}^2\right)^{#1}}
\newcommand{\fourier}[2][\gamma]{\widehat{#2}(#1)}
\newcommand{\fourierdiff}[2]{{\left(\widehat{#1}-\widehat{#2}\right)}(\gamma)}
\newcommand{\dmax}{\delta_{\mathrm{max}}}
\newcommand{\Hmind}{{H^m}}
\newcommand{\Hsind}{{H^s}}
\newcommand{\CGreen}{c_1}
\newcommand{\CExGOS}{c_2}
\newcommand{\Cgen}{c_2}
\newcommand{\CFKwGOS}{c_3}
\newcommand{\Clambda}{c_4}
\newcommand{\sol}{f}
\newcommand{\solaldel}{\sol^{\delta}_{\alpha}}
\newcommand{\data}{g}
\newcommand{\ui}{u^{\rm i}}
\newcommand{\us}{u^{\rm s}}
\newcommand{\Rset}{\mathds{R}}
\newcommand{\Zset}{\mathds{Z}}
\newcommand{\paren}[1]{\left(#1\right)}
\newcommand{\bracket}[1]{\left[#1\right]}
\newcommand{\diffq}[2]{\frac{\partial #1}{\partial #2}}
\newcommand{\Xspace}{\mathcal{X}}
\newcommand{\Yspace}{\mathcal{Y}}
\newcommand{\solset}{\mathcal{D}}
\newcommand{\oalpha}{\alpha}
\begin{document}
\title[variational source condition in inverse medium scattering]{Verification of a variational source condition for acoustic 
inverse medium scattering problems}

\author{Thorsten Hohage and Frederic Weidling}

\address{Institute for Numerical and Applied Mathematics, University of Goettingen, Lotzestr. 16-18, 37083 Goettingen, Germany}

\ead{f.weidling@math.uni-goettingen.de}

\begin{abstract}
	This paper is concerned with the classical inverse scattering problem to recover the refractive index of a medium given near or far field measurements of scattered time-harmonic acoustic waves. It contains the first rigorous proof of (logarithmic) rates of convergence for Tikhonov regularization under Sobolev smoothness assumptions for the refractive index. This is achieved by combining two lines of research, conditional stability estimates via geometrical optics solutions and variational regularization theory.
\end{abstract}
\ams{47J06, 35Q60, 35R30, 47A52}

\noindent{\it Keywords\/}: inverse medium scattering, convergence rates, variational source condition, stability estimate

\submitto{\IP, version: \today}

\section{Introduction}
	Regularization theory deals with the approximate solution of ill-posed operator equations 
	\begin{equation*}
		F(\sol) = \data
	\end{equation*}
	in Hilbert or Banach spaces. In this paper we confine ourselves to Hilbert spaces $\Xspace$ and $\Yspace$, and $F$ maps from $\dom(F)\subset\Xspace$ to $\Yspace$. Let $\sol^{\dagger}\in\dom(F)$ denote the exact solution and $\data^{\delta}\in\Yspace$ noisy data satisfying  $\|F(\sol^{\dagger})-\data^{\delta}\|\leq \delta$. One of the most prominent methods to obtain stable approximations to $\sol^{\dagger}$ from such noisy data is Tikhonov regularization 
	\begin{eqnarray}\label{intro:eq:Tikhonov}
		\solaldel\in \argmin_{\sol\in \dom(F)}\bracket{\frac{1}{\alpha}\lVert F(\sol)-\data^{\delta}\rVert_{\Yspace}^2+\frac{1}{2} \lVert\sol\rVert_{\Xspace}^2}.
	\end{eqnarray}
A main question of regularization theory concerns the convergence 
$\|\sol_{\alpha}^{\delta}-\sol^{\dagger}\|_{\Xspace}\to 0$ as $\delta \to 0$ for appropriate choices
of $\alpha = \alpha(\delta,\data^{\delta})$,  
and the rate of this convergence. To obtain such rates, additional 
assumptions on $\sol^{\dagger}$ are required (see \cite[Prop. 3.11]{Engl1996}). 
For a long time such 
conditions were formulated as spectral source conditions of the form 
$\sol^{\dagger} = \varphi(F'[\sol^{\dagger}]^*F'[\sol^{\dagger}])w$ (see \cite{KNS:08} and numerous references therein) for 
some $w\in\Xspace$ and an index function $\varphi$, that is $\varphi:[0,\infty)\to[0,\infty)$ is a continuous increasing function  satisfying $\varphi(0)=0$. Starting with 
\cite{Hofmann2007} source conditions have been formulated in the form of 
variational inequalities
\begin{eqnarray}\label{intro:eq:varSC}
\fl\forall f\in \dom(F) \qquad 
		\frac{\beta}{2} \norm{\Xspace}{\sol^\dagger-\sol}^2 
\leq \frac{1}{2} \norm{\Xspace}{\sol}^2 - \frac{1}{2} \norm{\Xspace}{\sol^\dagger}^2+\psi\rbra{\norm{\Yspace}{F(\sol)-F(\sol^\dagger)}^2}
\end{eqnarray}
with some parameter $\beta\in(0,1]$, 
and this type of source conditions has become more and more popular 
in regularization theory (see e.g.\ \cite{Anzengruber2011, Flemming2011a, Flemming2012, Frick2012, Hofmann2012, Hohage2012, Werner2012, Anzengruber2013, Burger2013, Flemming2013, Grasmair2013, Grasmair2013a, Anzengruber2014, Cheng2014, Dunker2014, Hohage2014, Kaltenbacher2014a, Kaltenbacher2014}) due to the following advantages over spectral source conditions:
\begin{itemize}
\item Proofs based on variational source conditions tend to be much simpler 
than proofs based on spectral source conditions. E.g.,\ for Tikhonov 
regularization and concave index function $\psi$ a simple argument 
by Grasmair \cite{Grasmair2010} (see also \cite[Thm.~3.3]{Werner2012}) yields the convergence rate
\begin{equation}\label{intro:eq:rate}
\frac{\beta}{2}\norm{\Xspace}{f^\delta_{\oalpha}-f^\dagger}^2\leq 4\psi(\delta^2), 
\end{equation}
for an optimal choice of the regularization parameter $\oalpha$ fulfilling $-1/(2\oalpha)\in \partial(-\psi)(4\delta^2)$, where $\partial(-\psi)$ denotes the subdifferential of $-\psi$.
\item For bounded linear operators in Hilbert spaces variational source conditions are not only 
sufficient, but even necessary for certain rates of convergence for Tikhonov 
regularization and other regularization methods in many cases (see \cite{FHM:11}). 
For spectral source conditions this is 
only true in a supremum over $\sol^{\dagger}$ in certain smoothness classes, 
but not for individual $\sol^{\dagger}$. 
The result in \cite{FHM:11} were derived for so-called approximate source conditions, which 
have been shown to be equivalent to \eref{intro:eq:varSC} in \cite{Flemming2012,Flemming:12b}.
\item Variational source conditions do not involve the Fr\'echet derivative 
$F'$ of the forward operator. This reduces the regularity assumptions on $F$, 
but more importantly, it avoids restrictive assumptions on the relation of 
$F$ and $F'$ such as the tangential cone condition, which cannot be verified 
for most interesting nonlinear applications, e.g.\ inverse scattering problems. 
\item Variational source conditions can be used not only in a Hilbert, but 
also in a Banach space setting and for more general noise models 
and data fidelity terms (see e.g.\ \cite{Werner2012,Dunker2014,Hohage2014}). 
\end{itemize}
However, so far variational source conditions could be verified only 
in rather few cases: One option is to derive them from spectral source conditions, 
but then the variational approach does not yield additional information. 
For linear operators $F$ and $l^q$ penalties with respect to certain bases in the 
range of $F^*$, characterizations of variational source condition have been derived 
in \cite{Anzengruber2013,Burger2013}.  Moreover, reformulations of \eref{intro:eq:varSC} 
with $\psi(x)=\sqrt{x}$ for a phase retrieval and an option pricing problem were 
derived in \cite{Hofmann2007}.  
The purpose of this paper is to show for a classical inverse scattering problem 
that a variational source conditions holds true under Sobolev-type smoothness assumptions. 

The forward problem we consider is as follows: Given a refractive index $n=1-\sol$ and one or several incident wave(s) 
$\ui$ solving the Helmholtz equation $\Delta \ui+\kappa^2\ui=0$, determine the total field(s) $u= \ui+\us$ such that 
	\begin{eqnarray}
		\Delta u+\kappa^2nu = 0\qquad \mbox{in }\Rset^3,\label{intro:eq:diff}\\
		\diffq{\us}{r}-i\kappa \us=\mathcal{O}\paren{\frac{1}{r^2}}\qquad \mbox{as }r=\absval x \to \infty. \label{intro:eq:sommer}
	\end{eqnarray}
	We will study inverse problems to recover the refractive index given measurements of scattered fields as formulated precisely in Section \ref{sec:main_results}. Related problems occur in many applications in nondestructive testing, geophysical exploration, and x-ray imaging. 



	The main tool of our analysis are geometrical optics solutions introduced in Section~\ref{sec:GOS}.  
The use of such functions is  well established for deriving uniqueness and conditional stability estimates, which for convenience we write in the form
	\begin{eqnarray}\label{eq:stability}
		\forall \sol_1,\sol_2\in K\qquad 
		\frac{\beta}{2}
		\|\sol_1-\sol_2\|_{\Xspace}^2\leq \psi\left(\|F(\sol_1)-F(\sol_2)\|_{\Yspace}^2\right)
	\end{eqnarray}
	for certain (typically compact) smoothness classes $K\subset \dom(F)$, e.g.\ Sobolev balls. 
	For the acoustic inverse medium scattering problem the first such estimate was established by Stefanov \cite{Stefanov1990} using a very strong norm in the image space $\Yspace$ and a logarithmic function of the form 
	\[
		\psi(t) = C(\ln t^{-1})^{-2\mu}\qquad \mbox{for some }\mu>0.
	\]  
	This estimate was improved in \cite{Haehner2001} by choosing $\Yspace$ as an $L^2$ space and making the exponent $\mu$ explicit with $\mu\leq 1$. 
	Recently, improved estimates were established  in \cite{Isaev2013a} where $\mu\to\infty$ as the smoothness exponent of the Sobolev ball $K$ tends to $\infty$. Lower bounds in \cite{Mandache2001,Isaev2013c} show that such upper bounds are essentially optimal. The dependence on the wave number $\kappa$ was made explicit in 
\cite{Isakov2014,Isaev2014} leading to so-called H\"older-logarithmic stability estimates with significantly improved stability for large $\kappa$.
	
	To make this first paper on the verification of variational source conditions accessible to a larger audience and to keep the level of technicality as low as possible, we did not try to incorporate all of these recent improvements for stability estimates into our analysis. 

	Let us compare variational source conditions \eref{intro:eq:varSC} and conditional stability estimates \eref{eq:stability} on the abstract level of operator equations. Obviously, \eref{intro:eq:varSC} for all $\sol^{\dagger}\in K$ implies \eref{eq:stability}  since we may assume w.l.o.g.\ that $\|\sol_1\|\geq \|\sol_2\|$ and choose $\sol=\sol_1$ and $\sol^{\dagger}=\sol_2$. However, the reverse implication is not obvious since the case $\|\sol\|<\|\sol^{\dagger}\|$ cannot be excluded and since \eref{intro:eq:varSC} is required for all $\sol$ in the larger set $\dom(F)$, not only $\sol\in K$.


It is interesting to have not only a stability estimate \eref{eq:stability}, but also a variational source condition \eref{intro:eq:varSC} since the latter yields error bounds for reconstructions from noisy data obtained by Tikhonov regularization and other commonly used regularization methods. From conditional stability estimates one can also derive error bounds for the method of \emph{quasisolutions} $\sol^{\delta}_K \in \argmin_{\sol\in K} \lVert F(\sol) - \data^{\delta} \rVert_{\Yspace}$ \newstuff{ (see \cite{Ivanov:62}).} \oldstuff{, but}\newstuff{However,} this method is often difficult to implement and rarely used in practice 
\newstuff{ (see e.g.\ \cite{ACK:82} for a theoretical discussion for an inverse obstacle scattering 
problem)}.	
	Moreover,  the set $K$ must be known explicitly, a typically unrealistic assumption, whereas Tikhonov regularization with a-posteriori selection rules for the regularization parameter $\alpha$ can attain optimal rates of convergence over a large range of smoothness classes \emph{without} a-priori knowledge of the smoothness of the solution. 
	
\oldstuff{In summary, it seems a worthwhile endeavor to study which of the large variety of conditional stability 
estimates for various forward operators and smoothness classes can be sharpened to variational source conditions. 
The present paper gives a first example, but we expect that other results may follow. }

	The plan for the remainder of this paper is as follows: The next section contains a precise formulation of our main results. Section \ref{sec:GOS} describes properties of geometrical optics solutions, and Sections \ref{sec:near} and \ref{sec:farfield}  contain the proofs of our main results. 

\section{Main results}\label{sec:main_results}
We assume that the contrast $f= 1-n$ is supported in the ball $\ball \pi$ where $\ball R:=\{ x \in \Rd \colon \lvert x \rvert \leq R\}$ for $R>0$. 
Due to the physical constraints $\Re(n)\geq 0$ and $\Im(n)\geq 0$, the contrast $f$ belongs to the set  
\[
\solset:=\left\{f \in L^{\infty}(\Rset^3)\colon \Im(f) \leq 0, \Re(f)\leq 1, \supp(f)\subset {\ball{\pi}}\right\}.
\]  
	It is well known that for all $f\in\solset$ a solution $u$ to \eref{intro:eq:diff} exists, and it is unique due to \eref{intro:eq:sommer}. 
	Corresponding inverse problem consist in recovering $f$ from measurements of certain total fields $u$ for different incident fields 
	$u^\mathrm{i}$. We will discuss two such problems below. 
	
	The first one is to reconstruct $f$ from \emph{near field data}
	\begin{eqnarray*}
		w_f(x,y)=u^\mathrm{i}_y(x)+u^\mathrm{s}_y(x),\quad (x,y)\in \partial \ball R \times \partial \ball R,
	\end{eqnarray*}
	for some $R>\pi$. That is we measure for each incident point source wave of the form
	\begin{eqnarray*}
		u^\mathrm{i}_y(x)=\frac{1}{4\pi} \frac{\e^{i\kappa \absval{x-y}}}{\absval{x-y}}
	\end{eqnarray*}
	centered at $y\in \partial \ball R$  the corresponding total field on the sphere $\partial \ball R$. Our aim is to solve the equation 
	$F_\mathrm{n} (\sol)=w$ for given  data $w$, where 
	\begin{eqnarray*}
		F_\mathrm{n}\colon\solset \rightarrow  \Lspace{\partial \ball{R}^2},\quad f \mapsto w_f
	\end{eqnarray*}
	is the near field operator that maps the contrast $f$ to the Green's function $w_f$ of the pde. 
As preimage space $\Xspace$ we choose the Sobolev space $H^m_0(\ball \pi)$ equipped with the norm
\begin{eqnarray*}
	\norm{H^m}{f}:=\left(\sum_{\gamma\in \Zset^3} \sobolev{m} \absval{\fourier f}^2\right)^{1/2}
\end{eqnarray*}
where $\fourier f=(2 \pi)^{-3/2} \int_{\cube \pi} f(x) \e^{-i\gamma \cdot x} \mathrm d x$ are the Fourier coefficients of $f$ in the cube $\cube \pi= (-\pi,\pi)^3$ with $f$ extended by $0$ outside of $\ball \pi$. 
Our main result for near field data reads as follows:  
	
	\begin{theorem}[Variational source condition for near field data]\label{result:thm:sourcecon}
		Let $\frac{3}{2}< m<s$, $s\neq 2m+3/2$ and $\pi<R$. Suppose that the true contrast $f^\dagger$ satisfies $f^\dagger\in\solset$ with 
		$\lVert f^\dagger\rVert_\Hsind\leq C_s$ for some $C_s\geq 0$. Then a variational source condition \eref{intro:eq:varSC} 
		holds true for the operator $F_\mathrm{n}$ with 
		$\dom(F_\mathrm{n}):= \solset\cap\Xspace$, 
$\Yspace = \Lspace{\partial \ball{R}^2}$, 	$\beta=1/2$, and $\psi$ given by 
		\[
		\psi_\mathrm{n}(t):=A\left(\ln(3+t^{-1})\right)^{-2\mu}, \qquad \mu:=\min\left\{1,\frac{s-m}{m+3/2}\right\},
		\]
		where the constant $A>0$ depends only on $m,s,C_s,\kappa$, and $R$.
	\end{theorem}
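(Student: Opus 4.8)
The plan is to verify the following reformulation of the variational source condition \eref{intro:eq:varSC}. Expanding the square gives
\[
\frac{1}{2}\norm{\Hmind}{f}^2-\frac{1}{2}\norm{\Hmind}{f^\dagger}^2=-\Re\,\pairing[\Hmind]{f^\dagger}{f^\dagger-f}+\frac{1}{2}\norm{\Hmind}{f^\dagger-f}^2,
\]
so with $\beta=1/2$ the assertion is equivalent to $\Re\,\pairing[\Hmind]{f^\dagger}{f^\dagger-f}\le\frac{1}{4}\norm{\Hmind}{f^\dagger-f}^2+\psi_\mathrm{n}(\norm{\Yspace}{F_\mathrm{n}(f)-F_\mathrm{n}(f^\dagger)}^2)$ for all $f\in\dom(F_\mathrm{n})$, and for this it suffices to prove
\[
\left|\pairing[\Hmind]{f^\dagger}{f^\dagger-f}\right|\le\frac{1}{4}\norm{\Hmind}{f^\dagger-f}^2+\psi_\mathrm{n}\!\left(\norm{\Yspace}{F_\mathrm{n}(f)-F_\mathrm{n}(f^\dagger)}^2\right).
\]
Abbreviate $h:=f^\dagger-f$ and $\tau:=\norm{\Yspace}{F_\mathrm{n}(f)-F_\mathrm{n}(f^\dagger)}^2$. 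If $\norm{\Hmind}{h}\ge4\norm{\Hmind}{f^\dagger}$, then $|\pairing[\Hmind]{f^\dagger}{h}|\le\norm{\Hmind}{f^\dagger}\norm{\Hmind}{h}\le\frac{1}{4}\norm{\Hmind}{h}^2$ and, since $\psi_\mathrm{n}\ge0$, there is nothing to prove. Hence we may assume $\norm{\Hmind}{h}\le4\norm{\Hmind}{f^\dagger}\le4C_s$; because $m>3/2$, the Sobolev embedding $\Hmind(\ball{\pi})\hookrightarrow\Lspace[\infty]{\ball{\pi}}$ then bounds $\norm{\Lspace[\infty]{\ball{\pi}}}{f}$, and hence the $L^\infty$-norm of $n=1-f$ on $\ball{\pi}$, by a constant depending only on $m$ and $C_s$. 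This \emph{a priori} bound --- unavailable on all of $\dom(F_\mathrm{n})$, where $\Re f$ may be arbitrarily negative --- is exactly what makes the geometrical optics construction of Section~\ref{sec:GOS} applicable on both contrasts, which is why this reduction is essential.

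Fix a cutoff $\rho\ge1$, to be chosen, and split the Fourier series, $\pairing[\Hmind]{f^\dagger}{h}=\Sigma_{<\rho}+\Sigma_{\ge\rho}$, according to $|\gamma|<\rho$ and $|\gamma|\ge\rho$. For the high frequencies, the decay of $\fourier{f^\dagger}$ coming from $f^\dagger\in\Hsind$, Cauchy--Schwarz and Young's inequality give
\[
\left|\Sigma_{\ge\rho}\right|\le\left(1+\rho^2\right)^{(m-s)/2}\norm{\Hsind}{f^\dagger}\,\norm{\Hmind}{h}\le\frac{1}{8}\norm{\Hmind}{h}^2+2C_s^2\left(1+\rho^2\right)^{m-s}.
\]
For the low frequencies I will use the quantitative output of the geometrical optics solutions of Section~\ref{sec:GOS}, which I expect to take the form: there are constants $c_1,c_2>0$ (depending only on $\kappa$, $R$ and the now-bounded $L^\infty$-norms of the two contrasts) so that for every $\gamma\in\Zd$ and every $t$ larger than a fixed multiple of $\max\{1+|\gamma|,\ \norm{\Lspace[\infty]{\ball{\pi}}}{n}\}$,
\[
\left|\fourierdiff{f^\dagger}{f}\right|\le c_2\left(\mathrm{e}^{c_1t}\,\norm{\Yspace}{F_\mathrm{n}(f)-F_\mathrm{n}(f^\dagger)}+\frac{1}{t}\,\norm{\Lspace{\ball{\pi}}}{h}\right),
\]
expressing that one Fourier mode of the contrast is recovered from the data only up to an error growing exponentially in the frequency of the probing solutions.

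Now set $L:=\ln(3+\tau^{-1})$ and, for $\tau$ below a threshold $\tau_0$ (so that $L$ is large), take $t:=\lambda L$ with a small constant $\lambda>0$; then the admissibility condition holds for all $|\gamma|<\rho$ provided $\rho\le\lambda L/c_1-1$, and $\mathrm{e}^{c_1\lambda L}\norm{\Yspace}{F_\mathrm{n}(f)-F_\mathrm{n}(f^\dagger)}=\mathrm{e}^{c_1\lambda L}\tau^{1/2}$ is bounded by a constant times $\tau^{1/4}$, hence is $o(L^{-k})$ for every $k$. Using $\sobolev{m}=\sobolev{s/2}(1+|\gamma|^2)^{m-s/2}$ and Cauchy--Schwarz,
\[
\left|\Sigma_{<\rho}\right|\le\norm{\Hsind}{f^\dagger}\left(\sum_{|\gamma|<\rho}\left(1+|\gamma|^2\right)^{2m-s}\left|\fourier h\right|^2\right)^{1/2}\le C_s\,Q(\rho)^{1/2}\max_{|\gamma|<\rho}\left|\fourier h\right|,
\]
where $Q(\rho):=\sum_{|\gamma|<\rho}(1+|\gamma|^2)^{2m-s}$. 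The series $\sum_{\gamma\in\Zd}(1+|\gamma|^2)^{2m-s}$ converges exactly when $s>2m+3/2$, and in general $Q(\rho)\le C_{m,s}(1+\rho^{\,3+4m-2s})$; the borderline $s=2m+3/2$, where this series diverges logarithmically, is the only value that must be excluded. Inserting the geometrical optics estimate with $t=\lambda L$, bounding $\norm{\Lspace{\ball{\pi}}}{h}\le\norm{\Hmind}{h}$, and applying Young's inequality to the correction term gives
\[
\left|\Sigma_{<\rho}\right|\le\frac{1}{8}\norm{\Hmind}{h}^2+C\left(\frac{C_s^2\,Q(\rho)}{L^2}+C_s\,Q(\rho)^{1/2}\tau^{1/4}\right).
\]

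Combining the two estimates, for $\tau<\tau_0$,
\[
\left|\pairing[\Hmind]{f^\dagger}{h}\right|\le\frac{1}{4}\norm{\Hmind}{h}^2+C\,C_s^2\left(\left(1+\rho^2\right)^{m-s}+\frac{Q(\rho)}{L^2}\right)+C\,C_s\,Q(\rho)^{1/2}\tau^{1/4},
\]
and the last term, which decays faster than any power of $L$, is at most $\frac{1}{4}\psi_\mathrm{n}(\tau)$ once $A$ is chosen large. It remains to choose $\rho=\rho(L)\le\lambda L/c_1-1$ so that $(1+\rho^2)^{m-s}+Q(\rho)/L^2$ is of order $L^{-2\mu}$: if $s>2m+3/2$ then $Q$ stays bounded, so $Q(\rho)/L^2\sim L^{-2}$, and $\rho\sim L^{1/(s-m)}$ (admissible since $s-m>m+3/2>1$) makes $(1+\rho^2)^{m-s}\sim L^{-2}=L^{-2\mu}$, as $\mu=1$ here; if $3/2<m<s<2m+3/2$ then $Q(\rho)\sim\rho^{\,3+4m-2s}$ with $3+4m-2s>0$, and balancing $\rho^{2(m-s)}$ against $\rho^{\,3+4m-2s}/L^2$ forces $\rho\sim L^{2/(3+2m)}$ with common value of order $L^{-4(s-m)/(3+2m)}=L^{-2(s-m)/(m+3/2)}=L^{-2\mu}$. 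Finally, for $\tau\ge\tau_0$ one has $L\le\ln(3+\tau_0^{-1})$, so $\psi_\mathrm{n}(\tau)\ge A(\ln(3+\tau_0^{-1}))^{-2\mu}$, while $|\pairing[\Hmind]{f^\dagger}{h}|\le\norm{\Hmind}{f^\dagger}\norm{\Hmind}{h}\le\frac{1}{4}\norm{\Hmind}{h}^2+C_s^2$ always, so enlarging $A$ once more covers this regime. I expect the main obstacle to be the second half of the low-frequency step together with the contents of Section~\ref{sec:GOS}: establishing the per-frequency stability estimate above --- in particular with the correction measured only by $\norm{\Lspace{\ball{\pi}}}{h}$, which is what makes the $L^\infty$-bound of the first step suffice --- and then carrying out the two-parameter $(\rho,t)$ balancing precisely enough to reach the claimed exponent $\mu$, including the case split hidden in the minimum and the exclusion of $s=2m+3/2$.
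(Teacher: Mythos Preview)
Your proposal is correct and follows essentially the same approach as the paper: the identical reformulation of \eref{intro:eq:varSC}, the same case split on $\norm{\Hmind}{f^\dagger-f}$, the same high/low-frequency decomposition with the high part handled via Cauchy--Schwarz plus Young (the paper's Lemma~\ref{source:lem:highfreq}) and the low part via the per-mode geometrical optics bound (Lemma~\ref{source:lem:lowmain}), and the same $(\rho,t)$ balancing that produces the exponent $\mu$ and the exclusion $s\neq 2m+3/2$ via the borderline growth of $Q(\rho)=\sum_{|\gamma|<\rho}(1+|\gamma|^2)^{2m-s}$ (the paper's Lemma~\ref{source:lem:rho}). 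The only cosmetic difference is the order of operations in the low-frequency step: you apply Cauchy--Schwarz first to extract $\norm{\Hsind}{f^\dagger}$ and then bound $\max_{|\gamma|<\rho}|\fourier h|$, whereas the paper first inserts the per-mode bound for $|\fourier h|$ and then bounds $\sum_{|\gamma|<\rho}(1+|\gamma|^2)^m|\fourier{f^\dagger}|$ by Cauchy--Schwarz; both routes land on the same sum $Q(\rho)$ and the same two terms to balance.
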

	
	Actually, this theorem holds true for arbitrary $\beta\in (0,1)$, but the constant $A$ depends on the choice of $\beta$ 
	(see Remark \ref{rem:beta}).
	\newstuff{For a discussion of the case $s=2m+3/2$ see Remark \ref{rem:s=2m+3/2}.}
	From the result cited in \eref{intro:eq:rate} we obtain a rigorous error bound for Tikhonov regularization 
	without further assumptions on $F_\mathrm{n}$:
	
	\begin{corollary}\label{result:cor:ratenear}
		Under the assumptions of Theorem \ref{result:thm:sourcecon} the error bound 
		\begin{eqnarray*}
			\norm{\Hmind}{f^\delta_{\oalpha}-f^\dagger} \leq 4\sqrt{A} \rbra{ \ln (3+\delta^{-2})}^{-\mu}
		\end{eqnarray*}
    in terms of the noise level $\delta$ 
		holds true for the regularization scheme \eref{intro:eq:Tikhonov} if $\oalpha$ is chosen such that 
	$\frac{1}{2\oalpha}= A\frac{\partial \ln(3+t^{-1})^{-2\mu}}{\partial t}\big|_{t=4\delta^2}$.
	\end{corollary}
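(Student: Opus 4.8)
The plan is to combine Theorem~\ref{result:thm:sourcecon} with the abstract convergence rate \eref{intro:eq:rate} of Grasmair~\cite{Grasmair2010} (see also \cite[Thm.~3.3]{Werner2012}); the argument then consists only of checking that the hypotheses of \eref{intro:eq:rate} are in place and performing a substitution. By Theorem~\ref{result:thm:sourcecon} the variational source condition \eref{intro:eq:varSC} holds for $F_\mathrm{n}$ on $\dom(F_\mathrm{n})=\solset\cap\Xspace$ with $\Yspace=\Lspace{\partial \ball{R}^2}$, $\beta=1/2$ and $\psi=\psi_\mathrm{n}$; in particular $f^\dagger\in\dom(F_\mathrm{n})$, so the Tikhonov functional \eref{intro:eq:Tikhonov} is precisely the one treated in the Hilbert space setting underlying \eref{intro:eq:rate}.

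The one point requiring genuine work is to check that $\psi_\mathrm{n}(t)=A\bigl(\ln(3+t^{-1})\bigr)^{-2\mu}$ is a \emph{concave} index function, as \eref{intro:eq:rate} presupposes concavity of $\psi$. Continuity, monotonicity and $\psi_\mathrm{n}(0+)=0$ are immediate because $L(t):=\ln(3+t^{-1})$ is positive, continuous, strictly decreasing and blows up as $t\downarrow0$. For concavity I would differentiate twice: from $L'(t)=-\bigl(t(3t+1)\bigr)^{-1}$ one expresses $\psi_\mathrm{n}''$ in terms of $L$, $L'$, $L''$ and shows $\psi_\mathrm{n}''\le0$ on all of $(0,\infty)$ for every $\mu\in(0,1]$. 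The additive constant $3$ (rather than $1$) inside the logarithm is exactly what makes this inequality hold on the entire half-line and not merely near $t=0$; alternatively one can quote the elementary lemma on concavity of logarithmic index functions that is standard in the regularization literature.

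Once concavity is secured, the parameter choice stated in the corollary reads $\frac{1}{2\oalpha}=\psi_\mathrm{n}'(4\delta^2)$, and since $\psi_\mathrm{n}$ is differentiable we have $\partial(-\psi_\mathrm{n})(4\delta^2)=\{-\psi_\mathrm{n}'(4\delta^2)\}$, so this is precisely the admissible choice $-\frac{1}{2\oalpha}\in\partial(-\psi_\mathrm{n})(4\delta^2)$ in \eref{intro:eq:rate}. Applying \eref{intro:eq:rate} with $\beta=1/2$ therefore gives
\[
\frac14\,\norm{\Hmind}{f^\delta_{\oalpha}-f^\dagger}^2\leq 4\,\psi_\mathrm{n}(\delta^2)=4A\bigl(\ln(3+\delta^{-2})\bigr)^{-2\mu},
\]
and taking square roots produces $\norm{\Hmind}{f^\delta_{\oalpha}-f^\dagger}\leq 4\sqrt{A}\,\bigl(\ln(3+\delta^{-2})\bigr)^{-\mu}$, which is the assertion.

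The main (and essentially only) obstacle is the concavity verification for $\psi_\mathrm{n}$: it is elementary but a little delicate, since for a naive modification such as $\ln(1+t^{-1})$ concavity fails on part of $(0,\infty)$, so one must check that the constant $3$ restores it uniformly in $\mu\in(0,1]$. The remaining steps — matching the abstract framework of \eref{intro:eq:rate} and inserting $\beta=1/2$ together with the explicit form of $\psi_\mathrm{n}$ — are routine.
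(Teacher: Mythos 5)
Your argument is exactly the paper's intended route: Corollary \ref{result:cor:ratenear} is obtained by applying the cited rate \eref{intro:eq:rate} with $\beta=1/2$ and $\psi=\psi_\mathrm{n}$, noting that differentiability of the concave $\psi_\mathrm{n}$ makes the stated parameter choice the admissible subgradient choice, and taking square roots. Your explicit concavity check of $\psi_\mathrm{n}$ (which indeed works for $\mu\le 1$ thanks to the constant $3$ inside the logarithm) is a detail the paper leaves implicit, but it is correct and does not change the approach.
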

	
	We could formulate further corollaries from regularization theory, in particular on a-posteriori choice of the 
	regularization parameter. 
	Moreover, we obtain the following stability estimate:

	\begin{corollary}\label{result:cor:stabnear}
		Let $\frac{3}{2}<m<s$, $s\neq 2m+3/2$ and $\pi<R$. Let $f_1$ and $f_2$  satisfy $f_j \in \solset$ with $\lVert f_j \rVert_\Hsind\leq C_s$ for $j=1,2$ and some $C_s>0$. 
		Then the stability estimate
		\begin{equation}\label{eq:stabilityN}
			\norm{\Hmind}{f_1-f_2}\leq 2\sqrt{A} \rbra{\ln \left(3+\|F_\mathrm{n}(f_1)-F_\mathrm{n}(f_2)\|_{L^2(\partial \ball{R}^2)}^{-2}\right)}^{-\mu} 
		\end{equation}
		holds true with $\mu$ and $A$ as in Theorem \ref{result:thm:sourcecon}.
	\end{corollary}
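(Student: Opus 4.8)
The plan is to deduce \eref{eq:stabilityN} directly from the variational source condition of Theorem~\ref{result:thm:sourcecon}, following the elementary observation already made in the introduction for passing from \eref{intro:eq:varSC} to \eref{eq:stability}. Essentially no further work is required beyond choosing the roles of $f_1$ and $f_2$ correctly and checking admissibility.

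First I would verify that both contrasts are admissible inputs for Theorem~\ref{result:thm:sourcecon}. By hypothesis $f_j\in\solset$ and $\norm{\Hsind}{f_j}\leq C_s$ for $j=1,2$, so each $f_j$ qualifies as the ``true contrast'' $f^\dagger$ in that theorem. Moreover, since $m<s$, the bound $\norm{\Hsind}{f_j}\leq C_s$ together with $\supp f_j\subset\ball\pi$ places $f_j$ in $\HspaceO{m}{\ball\pi}=\Xspace$, hence $f_j\in\solset\cap\Xspace=\dom(F_\mathrm{n})$, so the variational inequality may legitimately be tested against $f_j$.

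Next I would order the two contrasts: without loss of generality assume $\norm{\Hmind}{f_1}\geq\norm{\Hmind}{f_2}$ (swapping $f_1$ and $f_2$ changes neither side of \eref{eq:stabilityN}). Applying Theorem~\ref{result:thm:sourcecon} with $f^\dagger:=f_1$ and the admissible test element $f:=f_2\in\dom(F_\mathrm{n})$, and using $\beta=1/2$, yields
\[
\frac14\,\norm{\Hmind}{f_1-f_2}^2\leq\frac12\,\norm{\Hmind}{f_2}^2-\frac12\,\norm{\Hmind}{f_1}^2+\psi_\mathrm{n}\!\left(\norm{\Yspace}{F_\mathrm{n}(f_1)-F_\mathrm{n}(f_2)}^2\right).
\]
The decisive point is that the norm difference on the right is nonpositive by our ordering, so it may simply be dropped. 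Inserting the explicit $\psi_\mathrm{n}(t)=A\bigl(\ln(3+t^{-1})\bigr)^{-2\mu}$, taking square roots, and multiplying by $2$ then produces exactly \eref{eq:stabilityN}, with the same $\mu$ and $A$ as in Theorem~\ref{result:thm:sourcecon} (recall $\Yspace=\Lspace{\partial\ball{R}^2}$).

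Since the argument is a one-line consequence of the variational source condition, there is no real obstacle here; the technical content has all been spent in proving Theorem~\ref{result:thm:sourcecon}. The only two points requiring care are (i) designating as $f^\dagger$ the contrast with the \emph{larger} $\Xspace$-norm, which is precisely what makes the sign-indefinite term $\frac12\norm{\Xspace}{f}^2-\frac12\norm{\Xspace}{f^\dagger}^2$ in \eref{intro:eq:varSC} work in our favor, and (ii) the observation in the first step that both contrasts genuinely belong to $\dom(F_\mathrm{n})$, so that the variational inequality applies to them.
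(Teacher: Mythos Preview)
Your proposal is correct and is exactly the argument the paper intends: the corollary is not given a separate proof but follows immediately from the remark in the introduction that \eref{intro:eq:varSC} for all $f^\dagger\in K$ implies \eref{eq:stability} by ordering the two elements and dropping the nonpositive norm difference. Your assignment $f^\dagger:=f_1$, $f:=f_2$ (with $\norm{\Hmind}{f_1}\geq\norm{\Hmind}{f_2}$) is the correct one, since then $\tfrac12\norm{\Hmind}{f}^2-\tfrac12\norm{\Hmind}{f^\dagger}^2\leq 0$; the paper's informal wording in the introduction swaps the labels, but your version is what makes the sign come out right.
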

	 \eref{eq:stabilityN} differs from similar results in the literature discussed 
	in the introduction in the use of a Sobolev norm rather than an $L^2$ or an $L^{\infty}$ norm on the left hand side. 
	However, this could easily be accommodated for in proofs of previous stability results, so the relevance of this work is 
	rather expressed in Corollary \ref{result:cor:ratenear} than in Corollary \ref{result:cor:stabnear}. 

\medskip
For the formulation of the second inverse problem recall that 
every solution $u$ of the Helmholtz equation \eref{intro:eq:diff} fulfilling the Sommerfeld radiation condition \eref{intro:eq:sommer}
has the asymptotic behavior
	\begin{eqnarray*}
		u(x)=u^\mathrm{i}(x)+\frac{\e^{i \kappa r}}{r}\rbra{u^\infty(\hat x)+ \mathcal O\rbra{\frac{1}{r^2}}},\quad r=\absval x\rightarrow \infty
	\end{eqnarray*}
	uniformly for all directions $\hat x = x/r\in \mathfrak S^2:=\{x\in \Rd\colon \absval x=1\}$, and $u^{\infty}$ is called the far field pattern 
	(see \cite{Colton2013,Kirsch2011}). Often far field patterns $u^\infty (\cdot,d)$ corresponding to incident plane waves 
	$u^\mathrm{i}_{d,\infty}(x)= \e^{i \kappa x\cdot d}$ propagating in direction $d$ are considered as data. 
	This leads to an inverse problem for the forward operator 
	\begin{eqnarray*}
		F_\mathrm{f}\colon\solset\cap \Xspace \rightarrow \Lspace{\mathfrak S^2\times\mathfrak S^2},\quad f\mapsto u^\infty.
	\end{eqnarray*}

	\begin{theorem}[Variational source condition for far field data]\label{result:thm:sourcefar}
		Let the assumptions of Theorem \ref{result:thm:sourcecon} be satisfied. Then the operator 
		$F_\mathrm{f}$ with $\dom(F_\mathrm{f}):= \solset\cap\Xspace$ and $Y=\Lspace{\mathfrak S^2\times\mathfrak S^2}$ fulfills for all 
		$0<\theta<1$ a variational source condition \eref{intro:eq:varSC} with $\beta=1/2$ and $\psi$ given by 
		\[
		\psi_\mathrm{f}(t):=B\left(\ln(3+t^{-1})\right)^{-2\mu\theta}
		\]
		with $\mu$ given as in Theorem \ref{result:thm:sourcecon}, and a constant $B>0$  depending only on 
		$m,s,C_s,\kappa,\theta$, and $R$.
	\end{theorem}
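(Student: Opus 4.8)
The plan is to deduce the claim from Theorem~\ref{result:thm:sourcecon} for the near field operator, rather than to redo the geometrical optics construction. That theorem already furnishes the variational source condition \eref{intro:eq:varSC} for $F_\mathrm{n}$ with $\beta=\tfrac12$, the same preimage norm $\norm{H^m}{\cdot}$, and $\psi=\psi_\mathrm{n}$, and $\dom(F_\mathrm{n})=\dom(F_\mathrm{f})=\solset\cap\Xspace$. Since the two quadratic terms $\tfrac12\norm{H^m}{f}^2-\tfrac12\norm{H^m}{f^\dagger}^2$ occurring in \eref{intro:eq:varSC} are the same for both operators and $\beta$ is the same, it suffices to prove, for all $f\in\dom(F_\mathrm{f})$, the inequality between the data fidelity terms
\[
\psi_\mathrm{n}\!\left(\norm{L^2(\partial\ball R^2)}{F_\mathrm{n}(f)-F_\mathrm{n}(f^\dagger)}^2\right)\le\psi_\mathrm{f}\!\left(\norm{L^2(\Sz\times\Sz)}{F_\mathrm{f}(f)-F_\mathrm{f}(f^\dagger)}^2\right).
\]
As $\psi_\mathrm{n}$ is bounded and increasing, it is in turn enough to establish a conditional stability estimate $\norm{L^2(\partial\ball R^2)}{F_\mathrm{n}(f)-F_\mathrm{n}(f^\dagger)}\le\Phi(\varepsilon)$, where $\varepsilon:=\norm{L^2(\Sz\times\Sz)}{F_\mathrm{f}(f)-F_\mathrm{f}(f^\dagger)}$, with an ``almost polynomial'' modulus $\Phi$, by which I mean one with $\Phi(0)=0$ and $\ln(1/\Phi(\varepsilon))\ge c\,\ln(1/\varepsilon)/\ln\ln(1/\varepsilon)$ for small $\varepsilon>0$. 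Indeed, feeding such a $\Phi$ into $\psi_\mathrm{n}(t)=A(\ln(3+t^{-1}))^{-2\mu}$ produces a bound of order $(\ln(1/\varepsilon)/\ln\ln(1/\varepsilon))^{-2\mu}$, and since $\ln(1/\varepsilon)/\ln\ln(1/\varepsilon)\ge(\ln(1/\varepsilon))^\theta$ holds for every fixed $\theta\in(0,1)$ once $\varepsilon$ is small enough, this is dominated by $\psi_\mathrm{f}(\varepsilon^2)=B(\ln(3+\varepsilon^{-2}))^{-2\mu\theta}$ as soon as $B=B(m,s,C_s,\kappa,\theta,R)$ is chosen large enough; values of $\varepsilon$ bounded away from $0$ are absorbed into $B$ using the boundedness of $\psi_\mathrm{n}$ and of $F_\mathrm{n}$ on the set considered below, and $\varepsilon=0$ is trivial because then $\Phi(\varepsilon)=0$ and $\psi_\mathrm{n}(0)=0=\psi_\mathrm{f}(0)$.

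To obtain the conditional stability estimate I connect the two data sets through the kernel $v_f(x,y):=u^{\mathrm{s}}_{f,y}(x)$, the scattered part of the Green's function $w_f$; observe that $F_\mathrm{n}(f)-F_\mathrm{n}(f^\dagger)=v_f-v_{f^\dagger}$ on $\partial\ball R\times\partial\ball R$ because the incident part of $w_f$ does not depend on $f$. Using the symmetry $w_f(x,y)=w_f(y,x)$ of the Green's function and the Sommerfeld condition, for $|x|,|y|>\pi$ the kernel $v_f$ is a smooth solution of the Helmholtz equation that is radiating separately in $x$ and in $y$, hence
\[
v_f(x,y)=\sum_{\ell,\ell'\ge0}\ \sum_{|p|\le\ell,\ |q|\le\ell'}a^{(f)}_{\ell p,\ell' q}\,h^{(1)}_\ell(\kappa|x|)\,Y^{p}_\ell(\hat x)\,h^{(1)}_{\ell'}(\kappa|y|)\,Y^{q}_{\ell'}(\hat y).
\]
Letting $|x|,|y|\to\infty$ and using the reciprocity relations between near and far fields once in each variable, one sees that the coefficients $a^{(f)}_{\ell p,\ell' q}$ coincide, up to the explicit $f$-independent asymptotic factors of the spherical Hankel functions and a unitary change of variables on $\Sz\times\Sz$, with the spherical harmonic coefficients of the far field data $F_\mathrm{f}(f)$. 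In particular the $\ell^2$-norm of $\big(a^{(f)}_{\ell p,\ell'q}-a^{(f^\dagger)}_{\ell p,\ell'q}\big)_{\ell,p,\ell',q}$ is comparable to $\varepsilon$.

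Now fix $\rho\in(\pi,R)$. If $\norm{H^m}{f}\ge 3C_s$, then \eref{intro:eq:varSC} for $F_\mathrm{f}$ already holds because of its two quadratic terms alone (from $\norm{H^m}{f-f^\dagger}\le\norm{H^m}{f}+\norm{H^m}{f^\dagger}$ and $\norm{H^m}{f^\dagger}\le C_s$ one gets $\tfrac14\norm{H^m}{f-f^\dagger}^2\le\tfrac12\norm{H^m}{f}^2-\tfrac12\norm{H^m}{f^\dagger}^2$, and $\psi_\mathrm{f}\ge0$); so we may assume $\norm{H^m}{f}\le 3C_s$, in which case the standard well-posedness estimates for the forward problem \eref{intro:eq:diff}--\eref{intro:eq:sommer} -- the same uniform bounds that enter the proof of Theorem~\ref{result:thm:sourcecon} -- yield $\norm{L^2(\partial\ball\rho\times\partial\ball\rho)}{v_f}\le M$ with $M$ depending only on $m,s,C_s,\kappa$, and $\rho$. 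Truncating the difference of the two expansions at $\ell,\ell'\le N$, the low-degree part of $\norm{L^2(\partial\ball R^2)}{v_f-v_{f^\dagger}}^2$ is bounded by $c\,(\max_{\ell\le N}|h^{(1)}_\ell(\kappa R)|)^4\varepsilon^2$, while -- since $|h^{(1)}_\ell(\kappa R)|/|h^{(1)}_\ell(\kappa\rho)|$ decays at least like $(\rho/R)^{\ell}$ -- the high-degree part is bounded by $c\,(\rho/R)^{2N}M^2$. Because $\ln|h^{(1)}_N(\kappa R)|$ is of order $N\ln N$, choosing $N=N(\varepsilon)$ with $N\ln N$ of order $\eta\ln(1/\varepsilon)$ for a small fixed $\eta\in(0,1)$ makes the low-degree contribution $\mathcal O(\varepsilon^{2-c\eta})$ and, as then $N$ is of order $\eta\ln(1/\varepsilon)/\ln\ln(1/\varepsilon)$, the high-degree contribution $\mathcal O\big(M^2\exp(-c\,\ln(1/\varepsilon)/\ln\ln(1/\varepsilon))\big)$; the latter dominates and delivers the modulus $\Phi$ required above. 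Together with the first paragraph this proves Theorem~\ref{result:thm:sourcefar}.

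The genuine difficulty -- and the reason the exponent degrades from $2\mu$ to $2\mu\theta$ -- is the ill-posed reconstruction of the near field from the far field encoded in the rapidly growing Hankel weights $|h^{(1)}_\ell(\kappa R)|$: it is what forces the double truncation and the balancing of $N$ against $\varepsilon$. A secondary, more technical point is to secure the a priori bound $M$ uniformly over the a priori unbounded domain $\solset\cap\Xspace$, which is handled by the case distinction on $\norm{H^m}{f}$ above together with the uniform solvability of \eref{intro:eq:diff}--\eref{intro:eq:sommer} under the sign constraints defining $\solset$.
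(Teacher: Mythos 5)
Your proposal is correct in substance and follows the same high-level strategy as the paper: deduce the far field result by composing the near field variational source condition of Theorem \ref{result:thm:sourcecon} with a stability estimate that controls the near field data difference by the far field data difference, and absorb the regimes of large $\norm{\Hmind}{f}$ (via the quadratic terms) and of data differences bounded away from zero (via the boundedness of $\psi_\mathrm{n}$) into the constant $B$. Where you genuinely differ is in the provenance of that stability estimate: the paper simply invokes Lemma \ref{source:lem:neartofar}, quoted from \cite{Haehner2001}, which gives $\ln\bigl(1/\norm{\Lspace{\partial\ball{2R}^2}}{w_2-w_1}\bigr)\gtrsim\bigl(\ln(1/\norm{\Lspace{\mathfrak S^2\times\mathfrak S^2}}{u^\infty_2-u^\infty_1})\bigr)^{\theta}$ and is the sole source of the loss $2\mu\mapsto 2\mu\theta$, whereas you reprove a variant of it from scratch: Hankel/spherical-harmonic expansion of the kernel radiating in both variables, mixed reciprocity to identify the doubly asymptotic coefficients with the plane-wave far field data, and a two-radius truncation balancing $\ln\lvert h^{(1)}_N(\kappa R)\rvert\sim N\ln N$ against the decay $(\rho/R)^N$. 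Your computation is sound and in fact yields the stronger modulus $\ln(1/\Phi(\varepsilon))\gtrsim \ln(1/\varepsilon)/\ln\ln(1/\varepsilon)$, i.e.\ a rate $\bigl(\ln(1/\delta)/\ln\ln(1/\delta)\bigr)^{-2\mu}$, which you then correctly weaken to the stated $(\ln)^{-2\mu\theta}$ form; so your route buys a sharper intermediate estimate at the price of redoing the cited lemma. One step you assert rather than prove is the uniform bound $M$ on $\norm{\Lspace{\partial\ball{\rho}^2}}{v_f}$ for $f\in\solset$ with $\norm{\Hmind}{f}\leq 3C_s$: this does \emph{not} come from "the same uniform bounds that enter the proof of Theorem \ref{result:thm:sourcecon}" (that proof only needs bounds on geometrical optics solutions, not on the physical scattered fields), but it is standard — e.g.\ the set $\{f\in\solset:\norm{\Hmind}{f}\leq 3C_s\}$ is compact in $L^\infty$ by Sobolev--Rellich embedding and the Lippmann--Schwinger solution operator depends continuously on $f$, or one quotes explicit resolvent bounds; in the paper this issue is hidden in the constants $\omega,\varrho$ of Lemma \ref{source:lem:neartofar}. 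With that routine point filled in, your argument is complete.
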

	
	Once again we obtain a convergence rate via \eref{intro:eq:rate} and a stability estimate as corollary: 
	
	\begin{corollary}\label{result:cor:far}
Suppose the assumptions of Theorem \ref{result:thm:sourcefar} hold true. 
	\begin{enumerate}
		\item Then the minimizers of the Tikhonov functional \eref{intro:eq:Tikhonov} satisfy the error bound
		\begin{eqnarray*}
			\norm{\Hmind}{f^\delta_{\oalpha}-f^\dagger} \leq 4\sqrt{B} \rbra{ \ln (3+\delta^{-2})}^{-\mu\theta}
		\end{eqnarray*}
		if	$\frac{1}{2\oalpha}= B\frac{\partial \ln(3+t^{-1})^{-2\mu\theta}}{\partial t}\big|_{t=4\delta^2}.$
		\item For all $f_1, f_2\in\solset$ satisfying $\lVert f_j \rVert_\Hsind\leq C_s$ for $j=1,2$, $s\neq2m+3/2$ and some $C_s>0$ 
		the following stability estimate holds true:
		\begin{eqnarray*}
			\norm{\Hmind}{f_1-f_2}\leq& 2\sqrt{B} \rbra{\ln\left(3+\lVert F_\mathrm{f}(f_1)-F_\mathrm{f}(f_2)\rVert_{\Lspace{\mathfrak S^2\times\mathfrak S^2}}^{-2}\right)}^{-\mu\theta}. 
		\end{eqnarray*}
	\end{enumerate}
	\end{corollary}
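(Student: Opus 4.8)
\emph{Proof proposal.} Both assertions follow from Theorem \ref{result:thm:sourcefar} by invoking, respectively, the abstract convergence rate result quoted in \eref{intro:eq:rate} and the elementary implication ``variational source condition $\Rightarrow$ conditional stability estimate'' sketched in the introduction. So the plan is simply to specialise these two abstract arguments to the forward operator $F_\mathrm{f}$ with $\dom(F_\mathrm{f})=\solset\cap\Xspace$, $\Yspace=\Lspace{\mathfrak S^2\times\mathfrak S^2}$, $\beta=1/2$, and $\psi=\psi_\mathrm{f}$.

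For part (i) I would first record that $\psi_\mathrm{f}(t)=B\rbra{\ln(3+t^{-1})}^{-2\mu\theta}$ is an index function which is concave near $0$; after replacing it, if necessary, by a concave index function that agrees with it in a neighbourhood of the origin (this does not affect the asymptotics as $\delta\to 0$), the result of Grasmair cited in \eref{intro:eq:rate} applies to the Tikhonov functional \eref{intro:eq:Tikhonov} and yields $\tfrac{\beta}{2}\norm{\Hmind}{f^\delta_{\oalpha}-f^\dagger}^2\le 4\,\psi_\mathrm{f}(\delta^2)$ whenever $-1/(2\oalpha)\in\partial(-\psi_\mathrm{f})(4\delta^2)$. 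Since $\psi_\mathrm{f}$ is differentiable, this subdifferential inclusion reduces to $1/(2\oalpha)=\psi_\mathrm{f}'(4\delta^2)=B\,\partial_t\rbra{\ln(3+t^{-1})}^{-2\mu\theta}\big|_{t=4\delta^2}$, i.e.\ exactly the stated parameter choice. Inserting $\beta=1/2$ and taking square roots gives $\norm{\Hmind}{f^\delta_{\oalpha}-f^\dagger}\le 4\sqrt{\psi_\mathrm{f}(\delta^2)}=4\sqrt{B}\rbra{\ln(3+\delta^{-2})}^{-\mu\theta}$, which is the claimed bound.

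For part (ii) I would argue as in the introduction: given $f_1,f_2\in\solset$ with $\norm{\Hsind}{f_j}\le C_s$, relabel the two functions so that $\norm{\Hmind}{f_1}\ge\norm{\Hmind}{f_2}$, and apply the variational source condition of Theorem \ref{result:thm:sourcefar} with $f^\dagger:=f_1$ (admissible, since $\norm{\Hsind}{f_1}\le C_s$ and $s>m$ forces $f_1\in\solset\cap\Xspace=\dom(F_\mathrm{f})$) and with $f:=f_2\in\dom(F_\mathrm{f})$. Because $\tfrac12\norm{\Hmind}{f_2}^2-\tfrac12\norm{\Hmind}{f_1}^2\le 0$, the inequality \eref{intro:eq:varSC} collapses to $\tfrac{\beta}{2}\norm{\Hmind}{f_1-f_2}^2\le\psi_\mathrm{f}\rbra{\norm{\Lspace{\mathfrak S^2\times\mathfrak S^2}}{F_\mathrm{f}(f_1)-F_\mathrm{f}(f_2)}^2}$; with $\beta=1/2$, taking square roots yields the asserted stability estimate, and by symmetry of both sides in $f_1,f_2$ the relabeling is harmless.

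I do not expect any substantial obstacle here, since all the analytic content is already contained in Theorem \ref{result:thm:sourcefar}: the corollary is a routine consequence. The only points requiring a little care are, in part (i), the concavity/index-function properties of $\psi_\mathrm{f}$ needed to apply the cited rate result (the logarithmic function is concave only close to $0$, so one passes to a concave majorant, which leaves the $\delta\to 0$ behaviour unchanged) and the verification that the abstract hypotheses underlying \eref{intro:eq:rate} — weak sequential closedness of $\dom(F_\mathrm{f})$ and weak lower semicontinuity of the data-misfit term — hold for the scattering operator at hand, which is standard.
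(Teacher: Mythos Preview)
Your proposal is correct and follows exactly the route indicated by the paper, which does not give a separate proof but simply states that the corollary follows ``via \eref{intro:eq:rate} and a stability estimate''. Your assignment $f^\dagger:=f_1$ (the function with larger $H^m$-norm) and $f:=f_2$ is the right one for part (ii); note that the sketch in the introduction accidentally swaps these roles, so your version is actually the correct reading.
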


\section{Bounding Fourier coefficients using geometrical optics solutions}\label{sec:GOS}

As mentioned in the introduction, the key tool of our proof are \emph{geometrical optics solutions}. These functions are solutions 
of the perturbed Helmholtz equation \eref{intro:eq:diff} of the form
	\begin{equation}\label{eq:geom_opt}
		u(x)=\e^{i\zeta x}(1+v(x,\zeta)),
	\end{equation}
	where $\zeta \in \Cd\setminus\Rd$ satisfies $\zeta \cdot \zeta=\kappa^2$ (where $\zeta\cdot\xi:=\sum_{j=1}^3\zeta_j\xi_j$ for 
$\zeta,\xi\in\Cd$).  
Such exponentially growing solutions were introduced by Faddeev in \cite{Faddeev1965} and have been used to prove uniqueness of 
electrical impedance tomography and inverse scattering problems \cite{Calderon1980,Sylvester1987,Novikov1988} 
as well as for stability estimates \cite{Alessandrini1988,Stefanov1990,Haehner2001,Isakov2014,Isaev2013a,Isaev2014} for inverse scattering 
for space dimension $d\geq 3$. We recommend the textbook \cite{Kirsch2011} or the monograph \cite{Colton2013} 
for concise and self-contained introductions  
of the version of geometrical optics solutions used below and refer to the monograph \cite{Isakov2006} and the review \cite{Uhlmann2009} 
for numerous extensions and further references. 
	
A reason for the interest in geometrical optics solutions is the following lemma:
\begin{lemma}[{\cite[Lemma 3.2]{Haehner2001}}] \label{source:lem:lowfreq}
		Let $\pi<R<R^\prime$. Then there exists a positive constant $c_1$ (depending on $\kappa, R$, and $R^\prime$) such that 
		for all contrasts $f_1, f_2\in\solset$ with corresponding near fields $w_1$ and $w_2$, and 
		for all solutions $u_j\in H^{2}(\ball{R^\prime})$ to $\Delta u_j+\kappa^2 u_j=\kappa^2 f_j u_j$ 
		the following estimate holds true:
		\begin{eqnarray*}
			\fl \absval{ \intx{\ball{\pi}}{\rbra{f_1-f_2}u_1 u_2}}  \leq \CGreen  \norm{\Lspace{\partial \ball{R}^2}}{w_{1}-w_{2}} \norm{\Lspace{\ball{R^\prime}}}{u_1} \norm{\Lspace{\ball{R^\prime}}}{u_2}. 
		\end{eqnarray*}
	\end{lemma}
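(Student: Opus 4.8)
The plan is to prove Lemma~\ref{source:lem:lowfreq} by a Green's-identity / integral-representation argument that is standard in the geometrical-optics-solutions literature (it goes back to work of Sylvester--Uhlmann and H\"ahner).

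First I would rewrite the product $(f_1-f_2)u_1u_2$ as a difference of two terms that can be recognized via the Helmholtz equations for $u_1$ and $u_2$. Concretely, since $\Delta u_1 + \kappa^2 u_1 = \kappa^2 f_1 u_1$ and $\Delta u_2 + \kappa^2 u_2 = \kappa^2 f_2 u_2$, one has the pointwise identity
\begin{equation*}
\kappa^2(f_1-f_2)u_1u_2 = u_2(\Delta u_1+\kappa^2 u_1) - u_1(\Delta u_2+\kappa^2 u_2) = u_2\Delta u_1 - u_1\Delta u_2,
\end{equation*}
so that $\int_{\ball\pi}(f_1-f_2)u_1u_2\,\mathrm dx = \kappa^{-2}\int_{\ball{R}}(u_2\Delta u_1 - u_1\Delta u_2)\,\mathrm dx$, where I have enlarged the domain of integration from $\ball\pi$ to $\ball{R}$ using $\supp(f_j)\subset\ball\pi$ and the fact that on $\ball{R}\setminus\ball\pi$ each $u_j$ solves the homogeneous Helmholtz equation, so the integrand $u_2\Delta u_1 - u_1\Delta u_2$ vanishes there as well. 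Then Green's second identity turns this into a boundary integral over $\partial\ball{R}$ involving $u_1, u_2$ and their normal derivatives. The task is thus to bound this boundary integral by $\lVert w_1-w_2\rVert_{L^2(\partial\ball{R}^2)}$ times the two $L^2$-norms of the $u_j$ on the larger ball $\ball{R'}$.

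The key idea to get the near-field data into the estimate is that $w_j(x,y)$ is, up to the factor $1/(4\pi)$, the Green's function of the operator $\Delta + \kappa^2(1-f_j)$ with Sommerfeld radiation condition, and that any solution $u_j$ of $\Delta u_j+\kappa^2 u_j = \kappa^2 f_j u_j$ in $\ball{R'}$ can be represented inside $\ball{R}$ (in particular its Cauchy data on $\partial\ball{R}$) through a layer-type formula: evaluating $u_j$ and $\partial_\nu u_j$ on $\partial\ball{R}$ against the Green's function $w_j(\cdot,\cdot)$ and its normal derivative on $\partial\ball{R}\times\partial\ball{R}$, one recovers the Cauchy data of $u_j$ on $\partial\ball{R}$ from a single-layer/double-layer ansatz with density supported on $\partial\ball{R}$. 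Substituting the representation of the Cauchy data of $u_1$ (in terms of $w_1$) into the boundary integral and then subtracting the analogous representation with $w_2$, the difference of the two Green's functions $w_1-w_2$ appears linearly; the remaining factors are the Cauchy data of $u_1$ and $u_2$ on $\partial\ball{R}$, which are controlled by $\lVert u_j\rVert_{L^2(\ball{R'})}$ via interior elliptic estimates (trace theorem plus $H^2$-regularity of Helmholtz solutions, $\ball{R}\Subset\ball{R'}$, using the fixed gap $R<R'$ and that the $u_j$ satisfy a homogeneous Helmholtz equation in the annulus). This elliptic/trace step is precisely where the norm on the larger ball $\ball{R'}$ (rather than $\ball{R}$) and the dependence of $c_1$ on $R,R',\kappa$ enter.

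The main obstacle, and the heart of the argument, is making rigorous the claim that the Cauchy data of an arbitrary Helmholtz-type solution $u_j$ on $\partial\ball{R}$ is linearly and boundedly determined by the Green's function $w_j$ restricted to $\partial\ball{R}\times\partial\ball{R}$ — i.e.\ setting up the right single-/double-layer potential identity and checking its invertibility (well-posedness of the associated boundary integral equation, which may require excluding or handling a discrete set of resonant $\kappa$, or appealing to uniqueness for the exterior/interior transmission problem). Once that representation is in hand, the estimate is a sequence of Cauchy--Schwarz inequalities and elliptic regularity bounds with no further subtlety. Since this is quoted verbatim from \cite[Lemma 3.2]{Haehner2001}, I would either reproduce H\"ahner's construction of this Green's-function representation or simply cite it; the rest of the proof — Green's identity, enlarging the integration domain, and the trace/interior estimates — is routine and I would not write it out in full detail.
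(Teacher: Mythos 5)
The paper itself gives no proof of this lemma: it is quoted verbatim from H\"ahner and Hohage \cite{Haehner2001}, so "simply cite it", which you offer as a fallback, is exactly what the authors do. Your opening reduction is fine and standard: the pointwise identity $\kappa^2(f_1-f_2)u_1u_2=u_2\Delta u_1-u_1\Delta u_2$, enlarging the domain of integration to $\ball{R}$ (where the integrand vanishes on $\ball{R}\setminus\ball{\pi}$), Green's second identity, and the observation that the Cauchy data of $u_j$ on $\partial\ball{R}$ are bounded by $\|u_j\|_{L^2(\ball{R'})}$ through interior elliptic estimates in the annulus where $f_j=0$ — this is indeed where $R'>R>\pi$ enters and why the constant can be independent of $f_j$ (note $\solset$ carries no $L^\infty$ bound, yet $c_1$ depends only on $\kappa,R,R'$).

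The genuine gap is precisely at the step you call the heart of the argument, and your proposed completion would not deliver the stated result. First, substituting a single-/double-layer representation of the Cauchy data of $u_1$ (kernel $w_1$) and of $u_2$ (kernel $w_2$) into the boundary form does not make $w_1-w_2$ "appear linearly": you obtain products of $w_1(\cdot,y)$, $w_2(\cdot,z)$ and their normal derivatives, and turning these into the measured difference requires the Alessandrini-type resolvent identity $\kappa^2\int_{\ball{\pi}}(f_1-f_2)(x)\,w_1(x,y)\,w_2(x,z)\,\mathrm{d}x = \mathrm{const}\cdot(w_1-w_2)(y,z)$ (proved by Green's theorem in the exterior of $\ball{R}$, the radiation condition and reciprocity), which your sketch never states; without it you are left with terms such as $\partial_{\nu(z)}(w_1-w_2)(y,z)$ that are not controlled by the $L^2(\partial\ball{R}^2)$-norm of $w_1-w_2$ on the right-hand side. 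Second, obtaining the representation by inverting a boundary integral equation, "possibly excluding or handling a discrete set of resonant $\kappa$", is incompatible with the statement: the estimate holds for every $\kappa>0$ with $c_1$ uniform over all $f_1,f_2\in\solset$, whereas interior eigenvalues depend on the unknown medium, so both the exceptional set and any constant coming from inverting a medium-dependent boundary operator would depend on $f_j$ and could degenerate. The proof in \cite{Haehner2001} avoids this: it combines the resolvent identity above with a quantitative approximation of arbitrary solutions of $\Delta u+\kappa^2(1-f_j)u=0$ on the relevant ball by superpositions $\int_{\partial\ball{R}}w_j(\cdot,y)\varphi(y)\,\mathrm{d}s(y)$ whose densities satisfy $\|\varphi\|_{L^2(\partial\ball{R})}\leq c\,\|u\|_{L^2(\ball{R'})}$, an argument that rests only on the well-posedness of the radiating scattering problem (valid for all $\kappa$ and all $f\in\solset$ since $\Im f\leq 0$), not on solvability of an interior boundary-value problem. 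So either cite the lemma, as the paper does, or supply these two ingredients explicitly; the Green's-identity reduction alone does not bridge the gap.
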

	Note that if we could choose $u_1$ and $u_2$ as geometrical optics solutions with $\zeta_1=\overline{\zeta_2}$ and $v_1=v_2=0$, this lemma would immediately yield bounds on the Fourier coefficients of $f_1-f_2$. 
Even though such a choice is impossible for $f_1,f_2\not\equiv 0$,  we will derive bounds on the small Fourier coefficients 
of $f_1-f_2$ in Lemma \ref{source:lem:lowmain} using bounds on $v$ detailed below.

	\begin{proposition}[Existence and norm estimate of geometrical optics solutions]\label{geoopt:thm:geooptexist}
		Let $R^\prime >\pi$ and $f\in \solset$. Then for all $\zeta \in \Cd$ with $\zeta \cdot \zeta= \kappa^2$ such that $t:=\absval{\Imp \zeta}$ fulfills $ t \geq 2 \kappa^2 \frac{R^\prime}{\pi} \norm{\Lspace[\infty]{\Rd}}{f}$ there exists a function $v(x,\zeta)$ such that $u(x,\zeta):=\e^{i\zeta \cdot x}\rbra{1+v(x,\zeta)}$ belongs to 
		$H^{2}(\ball{R^\prime})$, solves the equation $\Delta u + \kappa^2 u= \kappa^2 f u$ in $\ball{R^\prime}$, and satisfies the estimates
		\begin{eqnarray}
			\norm{\Lspace{\ball{R^\prime}}}{v(\cdot, \zeta)} \leq  \frac{\CExGOS}{ t} \norm{\Lspace[\infty]{\Rd}}{f} \label{geoopt:lem:geooptprop:eq:vl2kappa}\\
			\norm{\Lspace{\ball{R^\prime}}}{v(\cdot, \zeta)}\leq \Cgen \label{geoopt:lem:geooptprop:eq:vl2un}\\
			\norm{\Lspace{\ball{R^\prime}}}{u(\cdot, \zeta)}\leq \Cgen \e^{R^\prime \absval{\Imp \zeta}}\label{geoopt:lem:geooptprop:eq:ul2}
		\end{eqnarray} 
		with a positive constant $\CExGOS$ depending on $\kappa, R$, and $R^\prime$.
	\end{proposition}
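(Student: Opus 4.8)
The plan is to insert the ansatz $u=\e^{i\zeta\cdot x}(1+v)$ into the differential equation, which turns it into a linear equation for $v$ with a constant--coefficient principal part, to periodize this equation on a cube and invert its principal part by a Hähner--type solution operator, and finally to solve the resulting fixed--point equation for $v$ by a Neumann series whose convergence is forced by the lower bound on $t$. Using $\zeta\cdot\zeta=\kappa^2$ one computes $\e^{-i\zeta\cdot x}\rbra{\Delta u+\kappa^2 u-\kappa^2 fu}=L_\zeta v-\kappa^2 f(1+v)$ with the conjugated operator $L_\zeta:=\Delta+2i\zeta\cdot\nabla$, so it remains to find $v$ with
\[
L_\zeta v=\kappa^2 f(1+v)\qquad\text{in }\ball{R^\prime}.
\]
Since $\pi<R^\prime$, the dilation $x\mapsto\frac{\pi}{R^\prime}x$ maps $\ball{R^\prime}$ onto $\ball\pi$ and $\supp f$ into the strictly smaller ball $\ball{\pi^2/R^\prime}\subset\cube\pi$. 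Putting $\tilde\zeta:=\frac{R^\prime}{\pi}\zeta$ (so $\tilde\zeta\cdot\tilde\zeta=\tilde\kappa^2$ with $\tilde\kappa:=\frac{R^\prime}{\pi}\kappa$ and $\absval{\Imp{\tilde\zeta}}=\frac{R^\prime}{\pi}t$) and extending the rescaled contrast $\tilde f$ by zero to $\cube\pi$ and then $2\pi$-periodically to $\Rd$, it suffices to solve $L_{\tilde\zeta}\tilde v=\tilde\kappa^2\,\tilde f\,(1+\tilde v)$ on $\Rd$ in a suitable space of (quasi-)periodic functions: because $\pi<R^\prime$ the periodization of $\tilde f$ agrees with $\tilde f$ on $\ball\pi$, so the back-transported restriction of $\tilde v$ solves the original equation in $\ball{R^\prime}$.

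For the periodic problem I would use Hähner's solution operator (see \cite{Haehner2001,Kirsch2011,Colton2013}): with respect to the paper's Fourier convention on $\cube\pi$, the Fourier multiplier $G_{\tilde\zeta}$ with symbol $-\rbra{\absval\gamma^2+2\tilde\zeta\cdot\gamma}^{-1}$, evaluated on a suitably $\tilde\zeta$-translated lattice so that this symbol stays bounded away from $0$ by $\absval{\Imp{\tilde\zeta}}$, is a bounded right inverse of $L_{\tilde\zeta}$, it maps $L^2$ into the periodic Sobolev space of order $2$, and it obeys $\norm{\Lspace{\cube\pi}}{G_{\tilde\zeta}\phi}\le\absval{\Imp{\tilde\zeta}}^{-1}\norm{\Lspace{\cube\pi}}{\phi}$. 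Then $\tilde v$ solves the periodic equation iff $(I-\tilde\kappa^2 G_{\tilde\zeta}M_{\tilde f})\tilde v=\tilde\kappa^2 G_{\tilde\zeta}M_{\tilde f}1$, where $M_{\tilde f}$ denotes multiplication by $\tilde f$. The norm of $\tilde\kappa^2 G_{\tilde\zeta}M_{\tilde f}$ on $\Lspace{\cube\pi}$ is at most $\tilde\kappa^2\absval{\Imp{\tilde\zeta}}^{-1}\norm{\Lspace[\infty]{\Rd}}{f}=\frac{R^\prime}{\pi}\kappa^2\,t^{-1}\norm{\Lspace[\infty]{\Rd}}{f}$, which is $\le\frac12$ exactly under the hypothesis $t\ge 2\kappa^2\frac{R^\prime}{\pi}\norm{\Lspace[\infty]{\Rd}}{f}$; hence the Neumann series converges and $\tilde v=\sum_{k\ge 0}(\tilde\kappa^2 G_{\tilde\zeta}M_{\tilde f})^k\,\tilde\kappa^2 G_{\tilde\zeta}\tilde f$ lies in the periodic $H^2$, so the back-transported $v(\cdot,\zeta)$ belongs to $H^2(\ball{R^\prime})$ and $u(\cdot,\zeta)$ solves the desired equation there.

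It then remains to read off the three estimates. The geometric series and the bound on $G_{\tilde\zeta}$ give $\norm{\Lspace{\cube\pi}}{\tilde v}\le 2\frac{R^\prime}{\pi}\kappa^2\,t^{-1}\norm{\Lspace[\infty]{\Rd}}{f}\,\vol(\ball\pi)^{1/2}$, and undoing the dilation yields \eref{geoopt:lem:geooptprop:eq:vl2kappa} with a constant $\CExGOS$ depending only on $\kappa,R,R^\prime$; \eref{geoopt:lem:geooptprop:eq:vl2un} follows because the standing hypothesis forces $t^{-1}\norm{\Lspace[\infty]{\Rd}}{f}\le\frac{\pi}{2\kappa^2 R^\prime}$; and \eref{geoopt:lem:geooptprop:eq:ul2} follows from $\absval{\e^{i\zeta\cdot x}}\le\e^{R^\prime\absval{\Imp\zeta}}$ on $\ball{R^\prime}$ together with $\norm{\Lspace{\ball{R^\prime}}}{1+v}\le\vol(\ball{R^\prime})^{1/2}+\CExGOS$. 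The only genuinely delicate ingredient is the sharp lower bound $\absval{\absval\gamma^2+2\tilde\zeta\cdot\gamma}\ge\absval{\Imp{\tilde\zeta}}$ on the translated lattice---the unshifted symbol can vanish on $\Zd$---and it is precisely this sharp constant that makes the factor $2$ in the hypothesis the right one; everything else is bookkeeping with the dilation and the geometric series.
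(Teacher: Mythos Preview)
Your argument is correct and is precisely the H\"ahner construction that the paper invokes by citation: the paper's own proof defers the existence and estimate \eref{geoopt:lem:geooptprop:eq:vl2kappa} to \cite{Haehner1996,Haehner1998} and then derives \eref{geoopt:lem:geooptprop:eq:vl2un} and \eref{geoopt:lem:geooptprop:eq:ul2} exactly as you do (substitute the lower bound on $t$, respectively bound $\lvert \e^{i\zeta\cdot x}\rvert$ and use the triangle inequality). In effect you have written out the content of the cited lemmas---the rescaling to the cube, the quasi-periodic Fourier inverse $G_{\tilde\zeta}$ with the lattice shifted so that the symbol is bounded below by $\lvert\Im\tilde\zeta\rvert$, and the Neumann series with contraction factor $\tfrac12$---so there is no discrepancy in approach, only in level of detail.
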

	\begin{proof}
		For the mapping properties see \cite[Lemma 5]{Haehner1996}, the first estimate can be found in the proof of  \cite[Lemma 2.9]{Haehner1998}. To prove \eref{geoopt:lem:geooptprop:eq:vl2un}, insert the lower bound $2 \kappa^2\frac{R^\prime}{\pi} \norm{\Lspace[\infty]{\Rd}}{f}$ for $t$ into \eref{geoopt:lem:geooptprop:eq:vl2kappa}. As 
		\begin{eqnarray*}
			\norm{\Lspace{\ball{R^\prime}}}{u(\cdot, \zeta)}&=&\norm{\Lspace{\ball{R^\prime}}}{e^{i\zeta \cdot x}\rbra{1+v(x,\zeta)}}\\
			&\leq& \norm{\Lspace[\infty]{\ball{R^\prime}}}{e^{-(\Im \zeta) \cdot x}}\rbra{\norm{\Lspace{\ball{R^\prime}}}{1}+\norm{\Lspace{\ball{R^\prime}}}{v(x,\zeta)}},
		\end{eqnarray*}
		one obtains \eref{geoopt:lem:geooptprop:eq:ul2} by inserting \eref{geoopt:lem:geooptprop:eq:vl2un}.
	\end{proof}

	In the following the constant $M_{\mathrm{em}}$ is given by the Sobolev embedding theorem such that 
	\begin{eqnarray}
		\norm{\Lspace[\infty]{\overline{\ball{\pi}}}}{f}
\leq M_{\mathrm{em}} \norm{\Hmind}{f}\label{source:lem:lowfreq:eq:Memdef}
	\end{eqnarray}
	for all $f \in H_0^m(\ball \pi)$. The following estimate is similar to the weaker result \cite[Lemma 3.3]{Haehner2001} 
	where the factor $\|f_1-f_2\|_{H^m}$ on the right hand side is missing. A similar result can be found in 
	\cite[Lemma 2.1]{Isaev2014}, but its proof relies on more sophisticated results on geometrical optics solutions.  
	
	\begin{lemma}\label{source:lem:lowmain}
		Let $C_m>0$, $m>3/2$, $\pi<R<R^\prime$ and $f_1$ and $f_2$ be contrasts with $f_j \in \solset$ and $\lVert f_j \rVert_\Hmind \leq C_m$  
		with corresponding near field data $w_j$ for $j=1,2$. Define
		\begin{eqnarray}\label{source:lem:lowmain:eq:deft0}
			t_0:= 2 \kappa^2\frac{{R^\prime}}{\pi} M_{\mathrm{em}} C_m
		\end{eqnarray}
		with $M_{\mathrm{em}}$ defined as in \eref{source:lem:lowfreq:eq:Memdef}. 
		Let $t\geq t_0$ and $1\leq\varrho\leq 2\sqrt{\kappa^2+t^2}$. Then there exists a constant $\CFKwGOS>0$ depending only on $m, R, R^\prime,$
		\oldstuff{$\kappa$ and $\varrho$} \newstuff{and $\kappa$} 
		such that for all $\gamma \in \Zset^3$ satisfying $\absval{\gamma}\leq \varrho$  we have
		\begin{eqnarray*}
			\absval{\fourierdiff{f_1}{f_2}} \leq& \CFKwGOS \e^{4 R^\prime t}  \norm{\Lspace{\partial \ball{R}^2}}{w_{1}-w_{2}}  +\frac{\CFKwGOS}{t}\norm{\Hmind}{f_1-f_2}.
		\end{eqnarray*}
	\end{lemma}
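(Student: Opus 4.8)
The plan is to estimate the Fourier coefficient $\fourierdiff{f_1}{f_2}$ by feeding into the bilinear estimate of Lemma~\ref{source:lem:lowfreq} a product $u_1u_2$ of geometrical optics solutions, one for $f_1$ and one for $f_2$, whose exponential factors multiply to the plane wave $\e^{-i\gamma\cdot x}$. For $\gamma\in\Zset^3$ with $\absval{\gamma}\leq\varrho$, I would pick orthonormal $d_1,d_2\in\Rd$ with $\gamma\cdot d_1=\gamma\cdot d_2=0$ and set
\[
\zeta_1:=-\frac{\gamma}{2}+\mu\,d_1+it\,d_2,\qquad \zeta_2:=-\frac{\gamma}{2}-\mu\,d_1-it\,d_2,\qquad \mu:=\sqrt{t^2+\kappa^2-\frac{\absval{\gamma}^2}{4}}.
\]
The radicand is nonnegative exactly because $\absval{\gamma}\leq\varrho\leq 2\sqrt{\kappa^2+t^2}$, so $\zeta_1,\zeta_2\in\Cd$ are well defined, and using $\gamma\perp d_1$, $\gamma\perp d_2$ and $d_1\perp d_2$ one checks $\zeta_j\cdot\zeta_j=\kappa^2$, $\zeta_1+\zeta_2=-\gamma$ and $\absval{\Imp{\zeta_j}}=t$ for $j=1,2$. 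Since $\norm{\Lspace[\infty]{\Rd}}{f_j}\leq M_{\mathrm{em}}\norm{\Hmind}{f_j}\leq M_{\mathrm{em}}C_m$ by \eref{source:lem:lowfreq:eq:Memdef} and $t\geq t_0=2\kappa^2\frac{R^\prime}{\pi}M_{\mathrm{em}}C_m$, Proposition~\ref{geoopt:thm:geooptexist} applies to each $f_j$ with $\zeta=\zeta_j$ and yields geometrical optics solutions $u_j=\e^{i\zeta_j\cdot x}(1+v_j)\in H^2(\ball{R^\prime})$ solving $\Delta u_j+\kappa^2u_j=\kappa^2f_ju_j$ in $\ball{R^\prime}$, together with the estimates \eref{geoopt:lem:geooptprop:eq:vl2kappa}--\eref{geoopt:lem:geooptprop:eq:ul2}.

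Since $u_1u_2=\e^{-i\gamma\cdot x}\rbra{1+v_1+v_2+v_1v_2}$ on $\ball{R^\prime}$ and $\supp(f_1-f_2)\subset\ball{\pi}$, the Fourier coefficient can be rewritten as
\[
(2\pi)^{3/2}\fourierdiff{f_1}{f_2}=\intx{\ball{\pi}}{(f_1-f_2)u_1u_2}-\intx{\ball{\pi}}{(f_1-f_2)\e^{-i\gamma\cdot x}\rbra{v_1+v_2+v_1v_2}},
\]
and I would bound the two integrals separately. The first integral is estimated by Lemma~\ref{source:lem:lowfreq} together with $\norm{\Lspace{\ball{R^\prime}}}{u_j}\leq\CExGOS\,\e^{R^\prime\absval{\Imp{\zeta_j}}}=\CExGOS\,\e^{R^\prime t}$ from \eref{geoopt:lem:geooptprop:eq:ul2}, which gives a bound of the form $c\,\e^{2R^\prime t}\norm{\Lspace{\partial\ball{R}^2}}{w_1-w_2}$, hence a fortiori the asserted $\CFKwGOS\,\e^{4R^\prime t}\norm{\Lspace{\partial\ball{R}^2}}{w_1-w_2}$. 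For the second integral I use $\absval{\e^{-i\gamma\cdot x}}=1$ (because $\gamma$ is real), H\"older's inequality $\absval{\intx{\ball{\pi}}{(f_1-f_2)\phi}}\leq\norm{\Lspace[\infty]{\Rd}}{f_1-f_2}\norm{\Lspace[1]{\ball{\pi}}}{\phi}$ combined with the Sobolev embedding \eref{source:lem:lowfreq:eq:Memdef} to replace $\norm{\Lspace[\infty]{\Rd}}{f_1-f_2}$ by $M_{\mathrm{em}}\norm{\Hmind}{f_1-f_2}$, and the elementary bounds $\norm{\Lspace[1]{\ball{\pi}}}{v_j}\leq\absval{\ball{\pi}}^{1/2}\norm{\Lspace{\ball{R^\prime}}}{v_j}$ and $\norm{\Lspace[1]{\ball{\pi}}}{v_1v_2}\leq\norm{\Lspace{\ball{R^\prime}}}{v_1}\norm{\Lspace{\ball{R^\prime}}}{v_2}$ for the three summands. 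Inserting \eref{geoopt:lem:geooptprop:eq:vl2kappa} while retaining one factor $t^{-1}$, using \eref{geoopt:lem:geooptprop:eq:vl2un} and $\norm{\Lspace[\infty]{\Rd}}{f_j}\leq M_{\mathrm{em}}C_m$ for the remaining $v$-factors, the second integral is bounded by $\CFKwGOS\,t^{-1}\norm{\Hmind}{f_1-f_2}$. Adding the two contributions and dividing by $(2\pi)^{3/2}$ gives the claim, with $\CFKwGOS$ taken as the larger of the two constants.

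The main obstacle is the construction in the first step: one has to arrange \emph{simultaneously} $\zeta_1+\zeta_2=-\gamma$ (so that the two exponential factors combine to $\e^{-i\gamma\cdot x}$), $\absval{\Imp{\zeta_j}}=t$ (so that the existence threshold of Proposition~\ref{geoopt:thm:geooptexist} becomes the hypothesis $t\geq t_0$ and the exponential factor in \eref{geoopt:lem:geooptprop:eq:ul2} remains $\e^{R^\prime t}$), and $\zeta_j\cdot\zeta_j=\kappa^2$ (so that $u_j$ solves the correct equation). The quadratic system these conditions impose is solvable precisely when $\absval{\gamma}\leq 2\sqrt{\kappa^2+t^2}$, which is exactly where the restriction on $\varrho$ enters. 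The remaining work is the bookkeeping above: checking that the growing factor is $\e^{4R^\prime t}$, that the error term genuinely carries the decaying factor $t^{-1}$, and that neither the construction nor the estimates introduce a dependence of the constants on $\gamma$ or $\varrho$ — the latter being the improvement over the cruder estimate \cite[Lemma 3.3]{Haehner2001} on which this lemma is modelled.
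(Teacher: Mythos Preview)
Your proposal is correct and follows essentially the same approach as the paper: the construction of $\zeta_1,\zeta_2$ is identical (up to the labelling of $d_1,d_2$), and the splitting into the two integrals and their treatment via Lemma~\ref{source:lem:lowfreq} and Proposition~\ref{geoopt:thm:geooptexist} is the same. The only cosmetic difference is in the second integral: the paper bounds the linear terms $v_1+v_2$ by Cauchy--Schwarz with $\norm{\Lspace{\ball\pi}}{f_1-f_2}$ and uses $\norm{\Lspace[\infty]{\ball\pi}}{f_1-f_2}$ only for the product $v_1v_2$, whereas you pull out $\norm{\Lspace[\infty]{\Rd}}{f_1-f_2}$ uniformly and then pass to $L^1$ norms of the $v$-terms; both routes lead to the same $\CFKwGOS t^{-1}\norm{\Hmind}{f_1-f_2}$ bound.
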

	\begin{proof}
		For fixed $\gamma \in \Zset^3$ choose two unit vectors $d_1$ and $d_2$ in $\Rd$ 
such that $\gamma \cdot d_1=\gamma \cdot d_2= d_1 \cdot d_2=0$. For $t>t_0$ define
		\begin{eqnarray*}
			\zeta_t &:=-\frac{1}{2} \gamma +i t d_1+\sqrt{\kappa^2+t^2-\frac{\absval{\gamma}^2}{4}} d_2,\\
			\eta_t &:=-\frac{1}{2} \gamma -i t  d_1-\sqrt{\kappa^2+t^2-\frac{\absval{\gamma}^2}{4}} d_2.
		\end{eqnarray*}
		Then $\zeta_t, \eta_t \in \Cd$ satisfy
		\begin{eqnarray*}
			\zeta_t+\eta_t=-\gamma, \qquad \absval{\Im(\zeta_t)}=\absval{\Im(\eta_t)}\geq t_0, \qquad \zeta_t \cdot \zeta_t = \eta_t \cdot \eta_t =\kappa^2.
		\end{eqnarray*}
		Hence by Theorem \ref{geoopt:thm:geooptexist} there exist geometrical optical solutions of the form
		\begin{eqnarray*}
			u_1 (x,\zeta_t)&= \e^{i \zeta_t \cdot x} \rbra{1+ v_1(x,\zeta_t)},\\
			u_2 (x,\eta_t)&= \e^{i \eta_t \cdot x} \rbra{1+ v_2(x,\eta_t)},
		\end{eqnarray*}
		where $u_j$ solves the equation $\Delta u_j+\kappa^2  u_j=\kappa^2 f_j u_j$ in $\ball{R^\prime}$ for $j=1,2$.
 It follows that
		\begin{eqnarray}
			\fl\absval{\fourierdiff{f_1}{f_2}}&=&\frac{1}{(2\pi)^{3/2}} \bigg\lvert\intx{\ball{\pi}}{(f_1-f_2)(x)\, \e^{-i\gamma \cdot x}}\bigg\rvert \nonumber \\
			&=&\frac{1}{(2\pi)^{3/2}} \bigg\lvert \intx{\ball{\pi}}{(f_1-f_2)(x)\, u_1 (x,\zeta_t)\, u_2 (x,\eta_t)} -\int_{\ball{\pi}} \Big[(f_1-f_2)(x)  \nonumber \\
			& &  \rbra{v_1(x,\zeta_t)+ v_2(x,\eta_t)+v_1(x,\zeta_t)\, v_2(x,\eta_t)} \e^{-i\gamma \cdot x} \Big]\, \text{d}x \bigg\rvert.\label{source:lem:lowmain:eq1}
		\end{eqnarray}
		
		The first integral on the right hand side of \eref{source:lem:lowmain:eq1} can be bounded by Lemma \ref{source:lem:lowfreq} and by the norm estimate \eref{geoopt:lem:geooptprop:eq:ul2} for geometrical optics solutions for $t\geq t_0$:
		\begin{eqnarray}
			\absval{\intx{\ball{\pi}}{(f_1-f_2)(x)\, u_1 (x,\zeta_t)\, u_2 (x,\eta_t)}} \leq  \CFKwGOS \norm{\Lspace{\partial \ball{R}^2}}{w_{1}-w_{2}}\e^{2 R^\prime t}.\label{source:lem:lowmain:eq2}
		\end{eqnarray}
		
Using \eref{geoopt:lem:geooptprop:eq:vl2kappa} and \eref{geoopt:lem:geooptprop:eq:vl2un} 
the second integral on the right hand side of \eref{source:lem:lowmain:eq1} can be estimated by 
		\begin{eqnarray}
			& &\absval{\intx{\ball{\pi}} {\e^{-i\gamma \cdot x} (f_1-f_2)(x)\, \rbra{v_1(x,\zeta_t)+ v_2(x,\eta_t)+v_1(x,\zeta_t)\, v_2(x,\eta_t)}}}\nonumber\\
			&\leq& \norm{\Lspace{\ball{\pi}}}{f_1-f_2} \rbra{ \norm{\Lspace{\ball{R^\prime}}}{v_1(\cdot,\zeta_t)}+\norm{\Lspace{\ball{R^\prime}}}{v_2(\cdot,\eta_t)}}+ \nonumber\\
			& &+\norm{\Lspace[\infty]{\ball{\pi}}}{f_1-f_2} \rbra{\norm{\Lspace{\ball{R^\prime}}}{v_1(\cdot,\zeta_t)}\norm{\Lspace{\ball{R^\prime}}}{v_1(\cdot,\zeta_t)}}  \nonumber\\
			&\leq& \frac{\CFKwGOS}{t}  \norm{\Hmind}{f_1-f_2} \label{source:lem:lowmain:eq3}.
		\end{eqnarray}		
		Plugging \eref{source:lem:lowmain:eq2} and \eref{source:lem:lowmain:eq3} into \eref{source:lem:lowmain:eq1} yields the assertion.
	\end{proof}
	To avoid another free parameter, we will set $R^\prime:=2R$ in the rest of this paper.

\section{Proof of Theorem \ref{result:thm:sourcecon}}\label{sec:near}
	We will prove the following equivalent formulation of the variational source condition \eref{intro:eq:varSC}:
	\begin{eqnarray}\label{result:thm:sourcecon:eq}
		\eqalign{
		& \Re\pairing[\Hmind]{f^\dagger}{f^\dagger-f}=\Rep{\sum\limits_{\gamma\in \Zset^3} \sobolev{m} \fourier{f^{\dagger}} \overline{\fourierdiff{f^{\dagger}}{f}}}\\
		\leq& \frac{1-\beta}{2}\norm{\Hmind}{f^\dagger - f}^2+\psi\rbra{\norm{\Yspace}{F_\mathrm{n}(\sol)-F_\mathrm{n}(\sol^\dagger)}^2}
		}
	\end{eqnarray}
	This form has also been used in regularization theory (see e.g.\ \cite{Hofmann2007}), but we preferred to formulate 
	our main theorems using the form \eref{intro:eq:varSC} since its relation to stability results is more obvious 
	and since the form \eref{intro:eq:varSC} is used in the proof of the convergence rate \eref{intro:eq:rate}.

	
We bound the high Fourier coefficients on the left hand side of \eref{result:thm:sourcecon:eq} as follows: 
	\begin{lemma}\label{source:lem:highfreq}
		If $\varrho>0$,  $0 \leq m\leq s$, $f^{\dagger} \in H_0^s(\ball \pi)$ and $\sol \in H_0^m(\ball \pi)$, then 
		\begin{eqnarray*}
	\fl	\Rep{\sum\limits_{\gamma\in \Zset^3\setminus \ball \varrho} \sobolev{m} \fourier{f^{\dagger}} \overline{\fourierdiff{f^{\dagger}}{\sol}}} \leq& \frac{1}{8} \norm{\Hmind}{f^{\dagger}-\sol}^2 
	+ 2\norm{\Hsind}{f^{\dagger}}^2 \varrho^{2(m-s)}.
		\end{eqnarray*}
	\end{lemma}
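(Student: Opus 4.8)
The plan is to treat the left-hand side as a pure high-frequency tail and bound it by the standard combination of Cauchy--Schwarz in $\ell^2$ and Young's inequality; the role of the hypothesis $m\le s$ is precisely to let us convert the extra $H^s$-regularity of $f^\dagger$ into the decaying factor $\varrho^{2(m-s)}$. Note that this lemma is purely a statement about Fourier series, so neither the support condition nor membership in $\solset$ enters; the constant $\frac18$ on the right is chosen with an eye on the budget $\frac{1-\beta}{2}=\frac14$ in \eref{result:thm:sourcecon:eq}, so that the analogous low-frequency contribution can later absorb the remaining $\frac18$.

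First I would discard the real part in favour of absolute values, reducing the claim to an estimate for $\sum_{\gamma\in\Zset^3\setminus\ball\varrho}\sobolev{m}\absval{\fourier{f^{\dagger}}}\,\absval{\fourierdiff{f^{\dagger}}{\sol}}$. Splitting the weight symmetrically as $\sobolev{m}=\sobolev{m/2}\,\sobolev{m/2}$ and applying the Cauchy--Schwarz inequality in $\ell^2(\Zset^3\setminus\ball\varrho)$ bounds this sum by the product of $\rbra{\sum_{\gamma\in\Zset^3\setminus\ball\varrho}\sobolev{m}\absval{\fourier{f^{\dagger}}}^2}^{1/2}$ and $\rbra{\sum_{\gamma\in\Zset^3\setminus\ball\varrho}\sobolev{m}\absval{\fourierdiff{f^{\dagger}}{\sol}}^2}^{1/2}$. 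Extending the index set of the second sum to all of $\Zset^3$ shows it is at most $\norm{\Hmind}{f^{\dagger}-\sol}^2$, so the second factor is $\leq\norm{\Hmind}{f^{\dagger}-\sol}$.

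For the first factor I would use that $\gamma\notin\ball\varrho$ forces $1+\absval{\gamma}^2>\varrho^2$, and since $m-s\le0$ the map $t\mapsto t^{m-s}$ is non-increasing, whence $\sobolev{m-s}\le\varrho^{2(m-s)}$ and therefore $\sobolev{m}\le\varrho^{2(m-s)}\sobolev{s}$ on $\Zset^3\setminus\ball\varrho$. This yields $\sum_{\gamma\in\Zset^3\setminus\ball\varrho}\sobolev{m}\absval{\fourier{f^{\dagger}}}^2\le\varrho^{2(m-s)}\norm{\Hsind}{f^{\dagger}}^2$, so the first factor is at most $\varrho^{m-s}\norm{\Hsind}{f^{\dagger}}$.

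Combining the two estimates gives
\[
\Rep{\sum_{\gamma\in\Zset^3\setminus\ball\varrho}\sobolev{m}\fourier{f^{\dagger}}\,\overline{\fourierdiff{f^{\dagger}}{\sol}}}\le\varrho^{m-s}\norm{\Hsind}{f^{\dagger}}\,\norm{\Hmind}{f^{\dagger}-\sol},
\]
and I would conclude with Young's inequality in the form $xy\le\frac18 x^2+2y^2$ applied with $x=\norm{\Hmind}{f^{\dagger}-\sol}$ and $y=\varrho^{m-s}\norm{\Hsind}{f^{\dagger}}$, which is exactly the asserted bound. I do not expect a genuine obstacle here: the argument is entirely routine, and the only points that need a moment's care are the direction of the inequality $\sobolev{m-s}\le\varrho^{2(m-s)}$ (which relies on $m\le s$, the right-hand side remaining a valid upper bound even when it exceeds $1$ for small $\varrho$) and the calibration of the Young constant so as to reproduce the $\frac18$ appearing in the statement.
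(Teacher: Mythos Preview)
Your proof is correct and follows essentially the same route as the paper: Cauchy--Schwarz on the high-frequency tail, the weight comparison $(1+|\gamma|^2)^{m-s}\le\varrho^{2(m-s)}$ for $|\gamma|>\varrho$, and Young's inequality with the constants $\tfrac18$ and $2$. The paper's argument differs only cosmetically (it passes through $(1+\varrho^2)^{m-s}$ as an intermediate step before reaching $\varrho^{2(m-s)}$), so there is nothing to add.
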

	\begin{proof}
		By the Cauchy-Schwarz inequality and Young's inequality $xy\leq 2x^2+\frac{1}{8}y^2$ for $x,y\geq 0$ we have
		\begin{eqnarray}
			& &\Rep{\sum\limits_{\Zset^3\setminus \ball \varrho} \sobolev{m} \fourier{f^{\dagger}} \overline{\fourierdiff{f^{\dagger}}{\sol}}} \nonumber \\
			&\leq&   \sqrt{\sum\limits_{\Zset^3\setminus \ball \varrho} \sobolev{m} \absval{\fourierdiff{f^{\dagger}}{\sol}}^2}
			\sqrt{\sum\limits_{\Zset^3\setminus \ball \varrho} \sobolev{m} \absval{\fourier{f^{\dagger}}}^2}\label{source:lem:highfreq:eq1}\\
&\leq& \frac{1}{8}\sum\limits_{\Zset^3\setminus \ball \varrho} \sobolev{m} \absval{\fourierdiff{f^{\dagger}}{\sol}}^2
  +  2\sum\limits_{\Zset^3\setminus \ball \varrho} \sobolev{m} \absval{\fourier{f^{\dagger}}}^2.\nonumber 
		\end{eqnarray}
		The first sum on the right hand side is bounded by $\frac{1}{8}\lVert{f^{\dagger}-\sol}\rVert_\Hmind^2$. 
		To bound the other sum we use that $f^{\dagger}$ is smoother than $\sol$ to obtain
		\begin{eqnarray*}
		\fl	\sum\limits_{\Zset^3\setminus \ball \varrho} \sobolev{m} \absval{\fourier{f^{\dagger}}}^2 &\leq \frac{1}{(1+\varrho^2)^{s-m}} \sum\limits_{\Zset^3\setminus \ball \varrho} \sobolev{s} \absval{\fourier{f^{\dagger}}}^2
			\leq  \varrho^{2(m-s)} \norm{\Hsind}{f^{\dagger}}^2.
		\end{eqnarray*}
		Inserting these estimates into \eref{source:lem:highfreq:eq1} completes the proof.
	\end{proof}
Obviously this bound is only useful with a proper choice of $\varrho$ as detailed below. To bound the 
low Fourier coefficients we use Lemma \ref{source:lem:lowmain}:
\begin{proof}[Proof of Theorem \ref{result:thm:sourcecon}]
Given $f\in H^m_0(\ball \pi)\cap \solset$  we distinguish two cases:\\
\emph{Case 1:} $\lVert{f^\dagger-f}\rVert_\Hmind > 4C_s$. By the Cauchy-Schwarz inequality we have
 		\begin{equation}\label{source:proof:eq:easypart}
		\eqalign{
 			\Re\pairing[\Hmind]{f^\dagger}{f^\dagger-f}&\leq\norm{\Hmind}{f^\dagger }\norm{\Hmind}{f^\dagger - f}\nonumber\\
 			&\leq C_s \norm{\Hmind}{f^\dagger - f}\leq \frac{1}{4}\norm{\Hmind}{f^\dagger - f}^2,
			}
 		\end{equation}
 		which clearly implies \eref{result:thm:sourcecon:eq}.

\emph{Case 2:} $\lVert{f^\dagger-f}\rVert_\Hmind \leq 4C_s$. 
Then $\norm{\Hmind}{f}\leq 5 C_s$, and hence we can apply Lemma \ref{source:lem:lowmain} with $C_m=5C_s$ and 
$t_0$ as defined in \eref{source:lem:lowmain:eq:deft0}. Moreover, we choose $R^\prime=2R$, $t \geq t_0$, and  $1\leq\varrho\leq 2\sqrt{\kappa^2+t^2}$ 
and set $\delta:= \lVert w_{f}-w_{f^\dagger} \rVert_{\Lspace{\partial \ball{R}^2}}$ to obtain
		%
\begin{equation}\label{source:proof:eq3}
\fl\eqalign{
			\Rep{\sum\limits_{\Zset^3\cap \ball \varrho}\sobolev{m} \fourier{f^\dagger} \overline{\fourierdiff{f^\dagger}{f}}}
			\leq & \CFKwGOS \sum\limits_{\gamma \in \Zset^3\cap \ball \varrho} \absval{\sobolev{m} \fourier{f^\dagger}} \times\\
			 &\times\rbra{ \e^{4Rt}\delta  +\frac{1}{t}\norm{\Hmind}{f-f^\dagger}}\cr.
			}
\end{equation}
		The sum on the right hand side of \eref{source:proof:eq3} can bounded by the Cauchy-Schwarz inequality and Lemma \ref{source:lem:rho} below with $\lambda=2m-s$ as follows 
		\begin{equation}\label{source:proof:eq4}
		\fl\eqalign{
			\sum\limits_{\gamma \in \Zset^3\cap \ball \varrho} \absval{\sobolev{m} \fourier{f^\dagger}} 
			&\leq  \sqrt{\sum\limits_{\gamma \in \Zset^3} \sobolev{s} \absval{\fourier{f^\dagger}}^2} \sqrt{\sum\limits_{\gamma \in \Zset^3\cap \ball \varrho} \sobolev{2m-s}} \\
	 &\leq \Clambda   C_s  \varrho^\tau
	}%
		\end{equation}
		with $\tau=\max\{2m+3/2-s,0\}$. Putting Lemma \ref{source:lem:highfreq}, \eref{source:proof:eq3}, and \eref{source:proof:eq4} together yields 
		\begin{eqnarray}
\fl & &			\Re{\pairing[\Hmind]{f^\dagger}{f^\dagger-f}} \nonumber\\
\fl&\leq & \frac{1}{8} \norm{\Hmind}{f^\dagger-f}^2 	+ 2C_s^2 \varrho^{2(m-s)}+\CFKwGOS\Clambda C_s\e^{4 R t}  \varrho^{\tau} \delta		
+ \frac{\CFKwGOS\Clambda C_s\varrho^{\tau}}{t}\norm{\Hmind}{f^\dagger-f} \label{source:proof:eq:5}\\
\fl			&\leq & \rbra{\frac{1}{8}+\frac{1}{8}\frac{\varrho^{2\tau+2(s-m)}}{\varepsilon t^2}} \norm{\Hmind}{f^\dagger-f}^2 
			+ \CFKwGOS\Clambda C_s \e^{4 R t}  \varrho^{\tau} \delta + 2C_s^2\rbra{1+ \varepsilon\CFKwGOS^2\Clambda^2}   \varrho^{2(m-s)} \nonumber
		\end{eqnarray}
		for all $\varepsilon>0$ by Young's inequality. Now we have to choose the free parameters $t,\varrho$ and $\varepsilon$ such that 
		the right hand side of the last inequality is approximately minimal for given $\delta>0$ and the constraints in Lemma \ref{source:lem:lowmain} 
		are satisfied. We pick $t,\varrho$ and $\varepsilon$ such that 
		\begin{equation}\label{eq:choices}
			12Rt = \ln(3+\delta^{-2})= \varrho^{\tau+s-m},\qquad \varepsilon:= (12R)^2.
		\end{equation}
		As $\tau+s-m\geq m+3/2>3$, there exists $\overline{t}>0$ such that $2t\geq (12Rt)^{1/(\tau+s-m)}$ for all $t\geq \overline{t}$.   
		Let us strengthen the constraint $t\geq t_0$ in Lemma~\ref{source:lem:lowmain} to $t\geq \tilde{t}_0$ with 
		$\tilde{t}_0:=\max\{t_0,\overline{t}\}$. Then
		the constraint $2\sqrt{\kappa^2+t^2}> 2t\geq (12Rt)^{1/(\tau+s-m)}=\varrho$ in Lemma~\ref{source:lem:lowmain} 
		holds true for all $t\geq \tilde{t}_0$, and $\rho\geq 1$ is satisfied as well. However, with the choice \eref{eq:choices} the constraint $t\geq \tilde{t}_0$ 
		itself is only satisfied for $\delta\leq \dmax$ with $\dmax:=  (\exp(12R\tilde{t}_0)-3)^{-1/2}$ (or $\dmax:=\infty$ 
		if $\exp(12R\tilde{t}_0)\leq 3$). 
		The case $\delta>\dmax$ will be treated at the end of this proof. 
		 Plugging \eref{eq:choices} 	into \eref{source:proof:eq:5} yields
		\begin{eqnarray*}
		\fl\Re{\pairing[\Hmind]{f^\dagger}{f^\dagger-f}} 
		&\leq& \frac{1}{4}\norm{\Hmind}{f^\dagger-f}^2 + 2C_s^2(1+\varepsilon\CFKwGOS^2\Clambda^2)\left(\ln(3+\delta^{-2})\right)^{-2\frac{s-m}{\tau+s-m}}\\
		&&+ \CFKwGOS\Clambda C_s(3+\delta^{-2})^{1/3}\delta \left(\ln(3+\delta^{-2})\right)^{\tau/(\tau+s-m)}\\
		&\leq& \frac{1}{4}\norm{\Hmind}{f^\dagger-f}^2+\tilde{A}\left(\ln(3+\delta^{-2})\right)^{-2\mu}
		\end{eqnarray*}
		for $\delta\leq \dmax$ with some constant $\tilde{A}$ depending only on  $m,s,C_s,\kappa$, and $R$ since the term in the second line tends to $0$ faster than 
		the last term in the first line as $\delta \searrow 0$. 
		This shows \eref{result:thm:sourcecon:eq} for $\delta\leq \dmax$.

		It remains to treat the case $\delta>\dmax$. By the Cauchy-Schwarz and Young's inequality we have
		\begin{eqnarray}\label{source:proof:eq:extbasic}
			\Re\pairing[\Hmind]{f^\dagger}{f^\dagger-f}
			&\leq&  \frac{1}{4}\norm{\Hmind}{f^\dagger-f}^2+\norm{\Hmind}{f^{\dagger}}^2\\
			&\leq& \frac{1}{4}\norm{\Hmind}{f^\dagger-f}^2+ C_s^2
		\end{eqnarray}
		Hence,  Theorem \ref{result:thm:sourcecon} holds true with $A:=\max\{\tilde{A},C_s^2(\ln(3+\dmax^{-2}))^{2\mu}\}$. 
	\end{proof}
	
		\begin{remark}\label{rem:beta}
		Note that in the previous proof both contributions to the term $\frac{1-\beta}{2}\lVert {f^\dagger- f}\rVert_{H^m}^2$ in \eref{result:thm:sourcecon:eq} 
		originate from an application of Young's inequality $xy\leq  \frac{1}{2\varepsilon}x^2+\frac{\varepsilon}{2}y^2$. Hence, by a proper 
		choice of $\varepsilon>0$ Theorem \ref{result:thm:sourcecon} can be shown for arbitrary $\beta\in (0,1)$. 
		\end{remark}

It remains to show the following lemma needed in the previous proof: 
	
	\begin{lemma}\label{source:lem:rho}
		For all $\lambda\neq -3/2$ there exists $\Clambda>0$ depending on $\lambda$ such that 
		\begin{eqnarray}
			\sqrt{\sum\limits_{\gamma \in \Zset^3\cap \ball \varrho} \sobolev{\lambda}} \leq \Clambda  \varrho^{\tau}\label{source:lem:rho:eq}
		\end{eqnarray}
		for all $\rho\geq 1$ where $\tau=\max\{\lambda+3/2,0\}$.
	\end{lemma}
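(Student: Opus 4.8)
The plan is to bound the sum $\sum_{\gamma\in\Zset^3\cap\ball\varrho}\sobolev{\lambda}$ by comparing it with an integral over the ball $\ball\varrho\subset\Rd$, and then evaluate (or at least estimate) that integral in polar coordinates. Since the summand $(1+|\gamma|^2)^\lambda$ is radially monotone in $|\gamma|$, each lattice point $\gamma$ can be associated with the unit cube $\gamma+(-\tfrac12,\tfrac12)^3$, on which $(1+|x|^2)^\lambda$ is comparable to $(1+|\gamma|^2)^\lambda$ up to a constant depending only on $\lambda$ (using $|x|\le|\gamma|+\tfrac{\sqrt3}{2}$ and, for the reverse bound when $\lambda<0$, $|x|\ge|\gamma|-\tfrac{\sqrt3}{2}$, treating the finitely many small $\gamma$ separately). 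Hence
\[
\sum_{\gamma\in\Zset^3\cap\ball\varrho}\sobolev{\lambda}\leq c(\lambda)\int_{\ball{\varrho+\sqrt3}}\left(1+|x|^2\right)^{\lambda}\,\mathrm dx
= c(\lambda)\,4\pi\int_0^{\varrho+\sqrt3}\left(1+r^2\right)^{\lambda}r^2\,\mathrm dr.
\]

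Next I would estimate the radial integral. For large $r$ the integrand behaves like $r^{2\lambda+2}$, so $\int_0^{\varrho+\sqrt3}(1+r^2)^\lambda r^2\,\mathrm dr$ is bounded by a constant times $(\varrho+\sqrt3)^{2\lambda+3}$ when $2\lambda+3>0$, i.e.\ $\lambda>-3/2$, and by a constant (independent of $\varrho$) when $2\lambda+3<0$, i.e.\ $\lambda<-3/2$; the borderline $\lambda=-3/2$ is exactly the excluded case where a logarithm appears. Since $\varrho\ge1$ we have $(\varrho+\sqrt3)^{2\lambda+3}\le c(\lambda)\varrho^{2\lambda+3}$, so in both cases the sum is bounded by $c(\lambda)\varrho^{\max\{2\lambda+3,0\}}$. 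Taking square roots gives $\Clambda\varrho^{\max\{\lambda+3/2,0\}}=\Clambda\varrho^\tau$, as claimed.

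The argument is essentially routine; the only point requiring a little care is the cube-comparison step for negative $\lambda$, where the bound $(1+|x|^2)^\lambda\le c(\lambda)(1+|\gamma|^2)^\lambda$ fails for $\gamma$ near the origin (where $|x|$ can be smaller than $|\gamma|$). This is handled by splitting off the finitely many lattice points with $|\gamma|\le\sqrt3$, whose total contribution is a constant absorbed into $\Clambda$ using $\varrho^\tau\ge1$. I expect this bookkeeping to be the main (and only) obstacle; everything else is a standard sum-to-integral comparison followed by an elementary one-dimensional integral estimate.
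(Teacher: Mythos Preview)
Your proposal is correct and follows essentially the same sum--to--integral comparison as the paper's proof; the only difference is bookkeeping (the paper treats $\lambda\geq 0$ by the crude bound ``number of lattice points times the maximal summand'' and, for $\lambda<0$, uses monotonicity on a small ball inside $\mathfrak{C}_\gamma\cap\ball{|\gamma|}$ rather than a comparability constant). Incidentally, your stated concern about $\gamma$ near the origin is unnecessary: since $|x-\gamma|\leq\sqrt{3}/2$ on the cube, the ratio $(1+|x|^2)/(1+|\gamma|^2)$ is bounded above and below uniformly in $\gamma\in\Zset^3$, so no separate treatment of small $\gamma$ is needed.
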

	\begin{proof}
	For each $\gamma \in \Zset^3$ we introduce a cube  $\mathfrak C_\gamma:=\{x\in\Rset^3:|x-\gamma|_{\infty}\leq 1/2\}$ around $\gamma$ with 
	side length $1$. Then $\bigcup_{\gamma\in\Zset}^3 \mathfrak C_\gamma = \Rset^3$, and two different cubes intersect at most on a set of 
	measure $0$. Consider first the case that $\lambda\geq 0$. 
	As $\bigcup_{\gamma \in \Zset^3\cap \ball \varrho}\mathfrak C_\gamma \subset \ball {\rho+\sqrt{3}}$, it follows that 
	$\#\{\Zset^3\cap \ball \varrho\}\leq \vol\left(\ball {\rho+\sqrt{3}}\right)$, so 	we can estimate
		\begin{equation*}
			\sum\limits_{\gamma \in \Zset^3\cap \ball \varrho} \sobolev{\lambda}
			\leq \frac{4\pi}{3} (\varrho+\sqrt{3})^3 (1+\varrho^2)^{\lambda}\leq c\varrho^{3+2\lambda}\qquad  \mbox{for }\varrho\geq 1.
		\end{equation*}
		Now consider the case $\lambda < 0$. For $\gamma\neq 0$ note that $\mathfrak C_\gamma\cap \ball{|\gamma|}$ contains the ball 
		with radius $1/4$ centered at $\gamma-\frac{1}{4}\operatorname{sgn}(\gamma)$, the volume of which is $V=\pi/48$. Hence we obtain
		\begin{eqnarray*}
			\fl\sum\limits_{\gamma \in \Zset^3\cap \ball{\varrho}} \sobolev{\lambda}
			&\leq 1+\frac{1}{V}\sum\limits_{\gamma \in \Zset^3\setminus\cbra{0}} \int_{\mathfrak C_\gamma\cap \ball \varrho} \rbra{1+\absval{x}^2}^{\lambda}\, \mathrm d x\\
			&\leq 1+\frac{4\pi}{V} \int_{1/2}^{{\varrho}}r^2 \rbra{1+r^2}^{\lambda}\, \mathrm d r
			\leq 1+192 \int_{1/2}^{{\varrho}}r^{2+2\lambda}\, \mathrm d r\\	
	&\leq \left\{\begin{array}{l l}
		 1+\frac{192}{3+2\lambda}\varrho^{3+2\lambda},& 0>\lambda>-3/2,\\[0.5ex]
		 1+ \frac{192}{(-2\lambda-3)2^{3+2\lambda}},  & \lambda<-3/2.
\end{array}\right.
		\end{eqnarray*}
		This shows the lemma for all values of $\lambda\neq -3/2$.
	\end{proof}
	
\newstuff{\begin{remark}\label{rem:s=2m+3/2}
It is easy to see that for $\lambda = -3/2$ the left hand side of \eref{source:lem:rho:eq} grows 
logarithmically in $\lambda$. Using this fact, it is straightforward to treat the  
case $s=2m+3/2$ in Theorem \ref{result:thm:sourcecon} in a similar way, which leads 
to a function $\psi_\mathrm{n}$ involving an additional logarithmic term. 
Since we believe that this upper bound is not optimal anyways, we did not consider it worth to 
include the computations for this special case. Note that if $f^\dagger\in H^s$ for 
$s=2m+3/2$ then also $f^\dagger \in H^{s^\prime}$ for all $s>s^\prime$, and therefore 
Theorem \ref{result:thm:sourcecon} still holds true with $s$ replaced by $s^\prime$ for all 
$s>s^\prime>m$. 
		\end{remark}
		}

\section{Proof of Theorem \ref{result:thm:sourcefar}}\label{sec:farfield}
	In this section we prove the extension of the result for near field data to far field data. The main tool 
is the following lemma relating near and far field data:
\begin{lemma}[{\cite[Section 4]{Haehner2001}}]\label{source:lem:neartofar}
Let $R>\pi$, $m>3/2$, $C_m>0$ and $0<\theta<1$. Then there exist constants $\omega,\varrho,\dmax>0$ such 
that for any two contrasts $f_1,f_2\in \solset$ satisfying $\norm{\Hmind}{f_j} \leq C_m$ we have 
\begin{eqnarray}
			\norm{\Lspace{\partial \ball{\newstuff{2}R}^2}}{w_2-w_1}^2 \leq  \varrho^2 \exp \rbra{- \rbra{- \ln \frac{\norm{\Lspace{\mathfrak S^2 \times \mathfrak S^2}}{u^\infty_2-u^\infty_1}}{\omega \varrho}}^\theta }\label{source:lem:neartofar:eq}
\end{eqnarray}
if $\norm{\Lspace{\mathfrak S^2 \times \mathfrak S^2}}{u^\infty_2-u^\infty_1}\leq \dmax$ 
 where $w_j$ and $u^\infty_j$ denote near and far field scattering data for $f_j$,  $j=1,2$. 
\end{lemma}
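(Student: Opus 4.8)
The plan is to pass to a multipole (spherical harmonic) representation of both operators, so that the near field operator $f\mapsto w_f$ on $\partial\ball{2R}\times\partial\ball{2R}$ and the far field operator $f\mapsto u^\infty$ on $\mathfrak S^2\times\mathfrak S^2$ are expressed through the \emph{same} scattering (T-)matrix $(T^{(j)}_{n,m;n',m'})$ of the medium $f_j$ in the orthonormal basis $Y_n^m$. Writing $S:=T^{(2)}-T^{(1)}$, the Jacobi--Anger expansion of the plane waves $\e^{i\kappa x\cdot d}$ shows that $\norm{\Lspace{\mathfrak S^2\times\mathfrak S^2}}{u^\infty_2-u^\infty_1}^2$ equals, up to a factor depending only on $\kappa$, the squared Hilbert--Schmidt norm $\sum_{n,m,n',m'}\absval{S_{n,m;n',m'}}^2$. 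Expanding instead the point sources $\ui_y$ with $\absval y=2R$ by the addition theorem for the fundamental solution shows that the scattered part of $w_f$, which is exactly $w_2-w_1$ since the incident field cancels, carries a Hankel factor $h_n^{(1)}(2\kappa R)$ in both the observation index $n$ and the source index $n'$; hence $\norm{\Lspace{\partial\ball{2R}^2}}{w_2-w_1}^2$ is, up to constants depending only on $\kappa$ and $R$, equal to $\sum\absval{h_n^{(1)}(2\kappa R)}^2\absval{h_{n'}^{(1)}(2\kappa R)}^2\absval{S_{n,m;n',m'}}^2$. The amplification $\absval{h_n^{(1)}(2\kappa R)}$ grows factorially in $n$, which is the analytic origin of the severe (merely logarithmic) ill-posedness of recovering near from far field data.

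The second ingredient is an a priori bound on the scattering coefficients. Since the contrast is supported in $\ball\pi$, every T-matrix entry is an integral over $\ball\pi$ of $f_j$ against products $j_n(\kappa\absval x)j_{n'}(\kappa\absval x)$ of spherical Bessel functions (for the Born term, with the full multiple-scattering series bounded using well-posedness of the direct problem for $f_j\in\solset$). Because $m>3/2$ gives $\norm{\Lspace[\infty]{\Rd}}{f_j}\le M_{\mathrm{em}}C_m$ by Sobolev embedding, and $\max_{r\le\pi}\absval{j_n(\kappa r)}\le c\,d_n$ with $d_n:=(\kappa\pi)^n/(2n+1)!!$, one obtains $\absval{S_{n,m;n',m'}}\le c\,C_m\,d_n d_{n'}$. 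The crucial cancellation is that the amplified coefficient $\absval{h_n^{(1)}(2\kappa R)}\,d_n$ behaves like $(\pi/(2R))^n/n$ and thus decays \emph{geometrically} with base $q_0:=\pi/(2R)<1$: the factorial growth of the Hankel function is exactly undone by the factorial decay $d_n$, precisely because $R>\pi$ places the scatterer strictly inside the measurement sphere.

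I would then split the double series at a cutoff degree $N$ and set $\epsilon:=\norm{\Lspace{\mathfrak S^2\times\mathfrak S^2}}{u^\infty_2-u^\infty_1}$. On the low block $n,n'\le N$ I bound each Hankel factor by its value at $N$, giving a contribution $\le c\,\absval{h_N^{(1)}(2\kappa R)}^4\,\epsilon^2$. On the complementary tail I insert the coefficient bound $\absval{S}\le c\,C_m d_nd_{n'}$ together with the cancellation above to obtain a geometrically small term $\le c\,C_m^2\,q_0^{2N}$. Using $\ln\absval{h_N^{(1)}(2\kappa R)}=N\ln N+O(N)$, balancing the two blocks amounts to $4N\ln N\approx 2\ln(1/\epsilon)$, i.e.\ $N\ln N\approx\frac{1}{2}\ln(1/\epsilon)$, whence $N\approx\ln(1/\epsilon)/\ln\ln(1/\epsilon)$ and both blocks attain the common order $\exp(-cN)=\exp\rbra{-c'\,\ln(1/\epsilon)/\ln\ln(1/\epsilon)}$. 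Since $\ln(1/\epsilon)/\ln\ln(1/\epsilon)\ge(-\ln\epsilon)^\theta$ for every fixed $\theta<1$ once $\epsilon$ is small, this yields \eref{source:lem:neartofar:eq} with suitable $\omega,\varrho$ (absorbing all constants) and a threshold $\dmax$ guaranteeing that $\epsilon$ is small enough for the asymptotics and the exponent comparison to apply.

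The main obstacle is the uniform asymptotic analysis of the cylinder functions: one must control $\absval{h_n^{(1)}(z)}$ and the products $\absval{h_n^{(1)}(2\kappa R)}\,d_n$ for \emph{all} $n$ (not merely in the limit $n\to\infty$), absorb the spherical-harmonic multiplicity factor $2n+1$, and verify the T-matrix decay rigorously through the Lippmann--Schwinger representation, so that the split-and-balance argument produces constants depending only on $\kappa,R,C_m$, and $\theta$. Once these estimates and Stirling's formula are available, the balancing and the passage to the exponent $\theta<1$ are elementary.
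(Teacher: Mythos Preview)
The paper does not prove this lemma; it is quoted from \cite[Section~4]{Haehner2001} without argument and used as a black box in the proof of Theorem~\ref{result:thm:sourcefar}. So there is no ``paper's own proof'' to compare against beyond the cited reference.

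Your sketch is in fact close to what is done in \cite{Haehner2001}. There the authors also expand both data types in spherical harmonics, identify a common coefficient array (what you call the $T$-matrix difference $S$), exploit the factorial growth $|h_n^{(1)}(2\kappa R)|\sim (2n-1)!!/(2\kappa R)^{n+1}$ against the factorial decay of the scattering coefficients, truncate at a level $N$, and balance. The a~priori decay of the coefficients in \cite{Haehner2001} is obtained not via a direct Lippmann--Schwinger bound on each $T$-matrix entry as you propose, but by first bounding $\|w_2-w_1\|_{L^2(\partial\ball{2R}^2)}$ uniformly in terms of $C_m$ (using well-posedness of the forward problem) and then reading off coefficient decay from that; this is slightly less work than your Born-series estimate but gives the same geometric rate $q_0^N$ with $q_0<1$ since the support lies strictly inside the sphere of radius~$2R$.

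Your outline is correct, and the only points that need care are exactly the ones you flag: uniform (not merely asymptotic) two-sided bounds on $|h_n^{(1)}(2\kappa R)|$ and $|j_n(\kappa r)|$ for $r\le\pi$ valid for all $n\ge 0$, and absorption of the degeneracy factor $(2n+1)$. These are standard and appear explicitly in \cite{Haehner2001}. Once they are in place, the choice $N\sim \ln(1/\epsilon)/\ln\ln(1/\epsilon)$ and the elementary inequality $\ln(1/\epsilon)/\ln\ln(1/\epsilon)\ge (-\ln\epsilon)^\theta$ for small $\epsilon$ yield \eref{source:lem:neartofar:eq} as you describe.
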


\begin{proof}[Proof of Theorem \ref{result:thm:sourcefar}]
We distinguish the same cases as in the proof of Theorem \ref{result:thm:sourcecon}:\\
\emph{Case 1:} $\lVert{f^\dagger-f}\rVert_\Hmind > 4C_s$. This case can be treated as in the proof of Theorem \ref{result:thm:sourcecon}. \\
\emph{Case 2:} $\lVert{f^\dagger-f}\rVert_\Hmind \leq 4C_s$. Then  $\norm{\Hmind}{f}\leq 5 C_s$. 
Hence Lemma \ref{source:lem:neartofar} can be applied with $C_m=5 C_s$. 
Setting $\delta:= \lVert F_\mathrm{f}(f^\dagger)- F_\mathrm{f}(f)\rVert_{\Lspace{\mathfrak S^2 \times \mathfrak S^2}}$ 
and $\varphi(t):= \varrho^2\exp(-(-\ln(\sqrt{t})+\ln(\omega\varrho))^{\theta})$, it follows from Theorem 
\ref{result:thm:sourcecon} 
and the monotonicity of $\psi_\mathrm{n}$ that the variational source condition \eref{intro:eq:varSC} holds true with 
$\psi(t) = \psi_\mathrm{n}(\varphi(t))$ if $\delta\leq \dmax$. 
Bounding $\psi_\mathrm{n}(t)\leq A(\ln t^{-1})^{-2\mu}$ for $t<1$ we obtain 
\[
\fl\psi_\mathrm{n}(\varphi(t)) \leq  
A\left(-\left(-\ln(\sqrt{t})+\ln (\varrho\omega)\right)^{\theta}-\ln\varrho^2\right)^{-2\mu}
\quad \mbox{for } \sqrt{t}\leq\min\left\{\dmax,\frac{1}{2}\right\}.
\]
Hence, it is easy to see  that there are constants $B>0$ and $\widetilde{\delta}_\mathrm{max}\in (0,
\min\{\dmax,\frac{1}{2}\}]$ 
such that 
\[
\psi_\mathrm{n}(\varphi(t))\leq  B(\ln(3+t^{-1}))^{-2\mu\theta}\quad \mbox{for }\sqrt{t}\leq \widetilde{\delta}_\mathrm{max}.
\]
This shows  \eref{intro:eq:varSC} for $\delta\leq \widetilde{\delta}_\mathrm{max}$.
The case $\delta>\widetilde{\delta}_\mathrm{max}$ can be treated as in the proof of Theorem \ref{result:thm:sourcecon}, see eq.\ \eref{source:proof:eq:extbasic}. 
	\end{proof}

\newstuff{\section{Conclusions and Outlook}
We have established a source condition in the form of a variational inequality for three-dimensional 
acoustic medium scattering problems which implies logarithmic rates of convergence of Tikhonov 
regularization under Sobolev smoothness assumptions as the noise level tends to zero. 
Our proof is based on geometrical optics solutions, a technique that has been developed to show 
uniqueness and stability results for a number of parameter identification problems in differential 
equations.}

\newstuff{Our results may be extended in several directions: One may study other differential equations, 
in particular other inverse scattering problem such as electromagnetic or elastic scattering 
problems (\cite{Haehner1998}), electrical impedance tomography (\cite{Alessandrini1988}) or 
two-dimensional acoustic scattering problems (\cite{bukhgeim:08}). Moreover, analogs to improved 
stability estimates with explicit dependence on the wave-number and exponents tending to $\infty$ 
with the Sobolev smoothness index would be of interest (\cite{Isaev2014}). Finally, variational 
source conditions based on Banach norms would be desirable. So far we do not see a general scheme 
for establishing variational source conditions based on conditional stability estimates, 
it rather seems that each extension requires new ideas.}

\newstuff{Even beyond the technique of geometrical optics solutions it seems a worthwhile endeavor to 
study which of the large variety of conditional stability estimates for various forward operators 
and smoothness classes can be sharpened to variational source conditions. 
}
\phantomsection\addcontentsline{toc}{section}{References}
\section*{References}
\bibliographystyle{abbrv}
\bibliography{ver_lit}

\begin{thebibliography}{10}

\bibitem{Alessandrini1988}
G.~{A}lessandrini.
\newblock {S}table determination of conductivity by boundary measurements.
\newblock {\em Applicable Analysis}, 27(1-3):153–172, Jan 1988.

\bibitem{ACK:82}
T.~S. {A}ngell, D.~{C}olton, and A.~{K}irsch.
\newblock {T}he three-dimensional inverse scattering problem for acoustic
  waves.
\newblock {\em J. Differential Equations}, 46(1):46--58, 1982.

\bibitem{Anzengruber2014}
S.~W. {A}nzengruber, B.~{H}ofmann, and P.~{M}athé.
\newblock {R}egularization properties of the sequential discrepancy principle
  for {T}ikhonov regularization in {B}anach spaces.
\newblock {\em Applicable Analysis}, 93(7):1382–1400, Jul 2014.

\bibitem{Anzengruber2013}
S.~W. {A}nzengruber, B.~{H}ofmann, and R.~{R}amlau.
\newblock {O}n the interplay of basis smoothness and specific range conditions
  occurring in sparsity regularization.
\newblock {\em Inverse Problems}, 29(12):125002, Nov 2013.

\bibitem{Anzengruber2011}
S.~W. {A}nzengruber and R.~{R}amlau.
\newblock {C}onvergence rates for {M}orozov’s discrepancy principle using
  variational inequalities.
\newblock {\em Inverse Problems}, 27(10):105007, Sep 2011.

\bibitem{bukhgeim:08}
A.~L. {B}ukhgeim.
\newblock {R}ecovering a potential from {C}auchy data in the two-dimensional
  case.
\newblock {\em J. Inv Ill-Posed Problems}, 16:19--33, 2008.

\bibitem{Burger2013}
M.~{B}urger, J.~{F}lemming, and B.~{H}ofmann.
\newblock {C}onvergence rates in $l^1$-regularization if the sparsity
  assumption fails.
\newblock {\em Inverse Problems}, 29(2):025013, Jan 2013.

\bibitem{Calderon1980}
A.-P. {C}alder{\'o}n.
\newblock {O}n an inverse boundary value problem.
\newblock In {\em Seminar on {N}umerical {A}nalysis and its {A}pplications to
  {C}ontinuum {P}hysics ({R}io de {J}aneiro, 1980)}, pages 65--73. Soc. Brasil.
  Mat., Rio de Janeiro, 1980.

\bibitem{Cheng2014}
J.~{C}heng, B.~{H}ofmann, and S.~{L}u.
\newblock {T}he index function and {T}ikhonov regularization for ill-posed
  problems.
\newblock {\em Journal of Computational and Applied Mathematics},
  265:110–119, Aug 2014.

\bibitem{Colton2013}
D.~L. {C}olton and R.~{K}ress.
\newblock {\em {I}nverse {A}ccoustic and {E}lectromagnetic {S}cattering
  {T}heory}, volume~93 of {\em Applied Mathecmatical Science}.
\newblock Springer Verlag, third edition, 2013.

\bibitem{Dunker2014}
F.~{D}unker and T.~{H}ohage.
\newblock {O}n parameter identification in stochastic differential equations by
  penalized maximum likelihood.
\newblock {\em Inverse Problems}, 30(9):095001, Aug 2014.

\bibitem{Engl1996}
H.~W. {E}ngl, M.~{H}anke, and A.~{N}eubauer.
\newblock {\em {R}egularization of inverse problems}.
\newblock Mathematics and its applications ; 375. Kluwer, Dordrecht [u.a.],
  1996.
\newblock Literaturverz. S. 299 - 318.

\bibitem{Faddeev1965}
L.~D. {F}addeev.
\newblock {I}ncreasing solutions of the {S}chr\"odinger equation.
\newblock {\em Sov. Phys., Dokl.}, 10:1033--1035, 1965.
\newblock translation from Dokl. Akad. Nauk SSSR 165, 514-517 (1965).

\bibitem{Flemming:12b}
J.~{F}lemming.
\newblock {\em {G}eneralized {T}ikhonov regularization and modern convergence
  rate theory in {B}anach spaces}.
\newblock Shaker, 2012.

\bibitem{Flemming2012}
J.~{F}lemming.
\newblock {S}olution smoothness of ill-posed equations in {H}ilbert spaces:
  four concepts and their cross connections.
\newblock {\em Applicable Analysis}, 91(5):1029–1044, May 2012.

\bibitem{Flemming2013}
J.~{F}lemming.
\newblock {V}ariational smoothness assumptions in convergence rate theory—an
  overview.
\newblock {\em Journal of Inverse and Ill-Posed Problems}, 21(3):395–409,
  June 2013.

\bibitem{Flemming2011a}
J.~{F}lemming and B.~{H}ofmann.
\newblock {C}onvergence rates in constrained {T}ikhonov regularization:
  equivalence of projected source conditions and variational inequalities.
\newblock {\em Inverse Problems}, 27(8):085001, Jul 2011.

\bibitem{FHM:11}
J.~{F}lemming, B.~{H}ofmann, and P.~{M}ath{\'e}.
\newblock {S}harp converse results for the regularization error using distance
  functions.
\newblock {\em Inverse Problems}, 27(2):025006, 18, 2011.

\bibitem{Frick2012}
K.~{F}rick and M.~{G}rasmair.
\newblock {R}egularization of linear ill-posed problems by the augmented
  {L}agrangian method and variational inequalities.
\newblock {\em Inverse Problems}, 28(10):104005, Oct 2012.

\bibitem{Grasmair2010}
M.~{G}rasmair.
\newblock {G}eneralized {B}regman distances and convergence rates for
  non-convex regularization methods.
\newblock {\em Inverse Problems}, 26(11):115014, Nov 2010.

\bibitem{Grasmair2013a}
M.~{G}rasmair.
\newblock {V}ariational inequalities and higher order convergence rates for
  {T}ikhonov regularisation on {B}anach spaces.
\newblock {\em Journal of Inverse and Ill-Posed Problems}, 21(3):379 – 394,
  2013.

\bibitem{Grasmair2013}
M.~{G}rasmair, O.~{S}cherzer, and A.~{V}anhems.
\newblock {N}onparametric instrumental regression with non-convex constraints.
\newblock {\em Inverse Problems}, 29(3):035006, Feb 2013.

\bibitem{Hofmann2007}
B.~{H}ofmann, B.~{K}altenbacher, C.~{P}öschl, and O.~{S}cherzer.
\newblock {A} convergence rates result for {T}ikhonov regularization in
  {B}anach spaces with non-smooth operators.
\newblock {\em Inverse Problems}, 23(3):987–1010, Jun 2007.

\bibitem{Hofmann2012}
B.~{H}ofmann and P.~{M}athé.
\newblock {P}arameter choice in {B}anach space regularization under variational
  inequalities.
\newblock {\em Inverse Problems}, 28(10):104006, Oct 2012.

\bibitem{Hohage2012}
T.~{H}ohage and F.~{W}erner.
\newblock {I}teratively regularized {N}ewton-type methods for general data
  misfit functionals and applications to {P}oisson data.
\newblock {\em Numerische Mathematik}, 123(4):745–779, Oct 2012.

\bibitem{Hohage2014}
T.~{H}ohage and F.~{W}erner.
\newblock {C}onvergence {R}ates for {I}nverse {P}roblems with {I}mpulsive
  {N}oise.
\newblock {\em SIAM Journal on Numerical Analysis}, 52(3):1203–1221, Jan
  2014.

\bibitem{Haehner1996}
P.~{H}ähner.
\newblock {A} {P}eriodic {F}addeev-{T}ype {S}olution {O}perator.
\newblock {\em Journal of Differential Equations}, 128(1):300–308, Jun 1996.

\bibitem{Haehner1998}
P.~{H}ähner.
\newblock {O}n {A}coustic, {E}lectromagnetic, and {E}lastic {S}cattering
  {P}roblems in {I}nhomogenous {M}edia.
\newblock \url{http://webdoc.sub.gwdg.de/ebook/e/2000/mathe-goe/haehner.pdf},
  1998.
\newblock Habilitation thesis, Georg August Universität Göttingen.

\bibitem{Haehner2001}
P.~{H}ähner and T.~{H}ohage.
\newblock {N}ew stability estimates for the inverse acoustic inhomogeneous
  medium problem and applications.
\newblock {\em SIAM J. Math. Anal.}, 33(3):670--685, 2001.

\bibitem{Isaev2013c}
M.~I. {I}saev.
\newblock {E}xponential instability in the inverse scattering problem on the
  energy interval.
\newblock {\em Functional Analysis and Its Applications}, 47(3):187--194, 2013.

\bibitem{Isaev2013a}
M.~I. {I}saev and R.~G. {N}ovikov.
\newblock {N}ew global stability estimates for monochromatic inverse acoustic
  scattering.
\newblock {\em SIAM J. Math. Anal.}, 45(3):1495--1504, 2013.

\bibitem{Isaev2014}
M.~I. {I}saev and R.~G. {N}ovikov.
\newblock {E}ffectivized {H}ölder-logarithmic stability estimates for the
  {G}el’fand inverse problem.
\newblock {\em Inverse Problems}, 30(9):095006, Aug 2014.

\bibitem{Isakov2006}
V.~{I}sakov.
\newblock {\em {I}nverse {P}roblems for {P}artial {D}ifferential {E}quations}.
\newblock Springer, Berlin, Heidelberg, New York, second edition, 1998.

\bibitem{Isakov2014}
V.~{I}sakov, {S}ei {N}agayasu, {G}unther {U}hlmann, and {J}enn-{N}an {W}ang.
\newblock {I}ncreasing stability of the inverse boundary value problem for the
  {S}chrödinger equation.
\newblock In {\em {Inverse problems and applications}}, volume 615 of {\em
  {Contemp. Math.}}, pages 131--141. {Amer. Math. Soc., Providence, RI}, 2014.

\bibitem{Ivanov:62}
V.~K. {I}vanov.
\newblock {O}n linear problems which are not well-posed.
\newblock {\em Dokl. Akad. Nauk SSSR}, 145:270--272, 1962.

\bibitem{Kaltenbacher2014a}
B.~{K}altenbacher, A.~{K}irchner, and S.~{V}eljović.
\newblock {G}oal oriented adaptivity in the {IRGNM} for parameter
  identification in {PDE}s: {I}. reduced formulation.
\newblock {\em Inverse Problems}, 30(4):045001, Apr 2014.

\bibitem{Kaltenbacher2014}
B.~{K}altenbacher, A.~{K}irchner, and B.~{V}exler.
\newblock {G}oal oriented adaptivity in the {IRGNM} for parameter
  identification in {PDE}s: {II}. all-at-once formulations.
\newblock {\em Inverse Problems}, 30(4):045002, Apr 2014.

\bibitem{KNS:08}
B.~{K}altenbacher, A.~{N}eubauer, and O.~{S}cherzer.
\newblock {\em {I}terative {R}egularization {M}ethods for {N}onlinear ill-posed
  {P}roblems}.
\newblock Radon Series on Computational and Applied Mathematics. de Gruyter,
  Berlin, 2008.

\bibitem{Kirsch2011}
A.~{K}irsch.
\newblock {\em {A}n {I}ntroduction to the {M}athematical {T}heory of {I}nverse
  {P}roblems}, volume 120 of {\em Applied Mathematical Science}.
\newblock Springer Verlag, second edition, 2011.

\bibitem{Mandache2001}
N.~{M}andache.
\newblock {E}xponential instability in an inverse problem for the
  {S}chr{\"o}dinger equation.
\newblock {\em Inverse Problems}, 17(5):1435, 2001.

\bibitem{Novikov1988}
R.~G. {N}ovikov.
\newblock {A} multidimensional inverse spectral problem for the equation
  {$-\Delta\psi +(v(x)-Eu(x))\psi=0$}.
\newblock {\em Funktsional. Anal. i Prilozhen.}, 22(4):11--22, 96, 1988.

\bibitem{Stefanov1990}
P.~{S}tefanov.
\newblock {S}tability of the inverse problem in potential scattering at fixed
  energy.
\newblock {\em Ann. Inst. Fourier (Grenoble)}, 40(4):867--884, 1990.

\bibitem{Sylvester1987}
J.~{S}ylvester and G.~{U}hlmann.
\newblock {A} {G}lobal {U}niqueness {T}heorem for an {I}nverse {B}oundary
  {V}alue {P}roblem.
\newblock {\em The Annals of Mathematics}, 125(1):153, Jan 1987.

\bibitem{Uhlmann2009}
G.~{U}hlmann.
\newblock {E}lectrical impedance tomography and {C}alder{\'o}n's problem.
\newblock {\em Inverse Problems}, 25(12):123011, 2009.

\bibitem{Werner2012}
F.~{W}erner and T.~{H}ohage.
\newblock {C}onvergence rates in expectation for {T}ikhonov-type regularization
  of inverse problems with {P}oisson data.
\newblock {\em Inverse Problems}, 28(10):104004, Oct 2012.

\end{thebibliography}

\end{document}